\newtheorem{theorem}{Theorem}[section]
\newtheorem{lemma}[theorem]{Lemma}
\theoremstyle{definition}
\newtheorem{definition}[theorem]{Definition}
\newtheorem{remark}{Remark}
\newtheorem{algorithm}{Algorithm}
\title[Projection methods for solving split equilibrium problems] 
      {Projection methods for solving split equilibrium problems}
\author[Dang Van Hieu]{}
\subjclass{Primary: 65K10, 65K15; Secondary: 90C33.}
 \keywords{Split equilibrium problem, Split inverse problem, Projection method, Diagonal subgradient method.}
 \email{dangvanhieu@tdtu.edu.vn}
\thanks{$^*$ Corresponding author: dangvanhieu@tdtu.edu.vn}
\begin{document}
\maketitle

\centerline{\scshape Dang Van Hieu$^*$}
\medskip
{\footnotesize
 \centerline{Applied Analysis Research Group, Faculty of Mathematics and Statistics,}
   \centerline{Ton Duc Thang University, Ho Chi Minh City, Vietnam}
} 

\medskip
%
%
\bigskip

 \centerline{(Communicated by the associate editor name)}

\begin{abstract}
The paper considers a split inverse problem involving component equilibrium problems in Hilbert spaces. This problem therefore 
is called the split equilibrium problem (SEP). It is known that almost solution methods for solving problem (SEP) are designed from two 
fundamental methods as the proximal point method and the extended extragradient method (or the two-step proximal-like method). Unlike 
previous results, in this paper we introduce a new algorithm, which is only based on the projection method, for finding solution approximations of problem (SEP), 
and then establish that the resulting algorithm is weakly convergent under mild conditions. Several of numerical results are reported to 
illustrate the convergence of the proposed algorithm and also to compare with others.
\end{abstract}

\section{Introduction}\label{intro}
The split feasibility problem \cite{CE1994} consists of finding a point in a closed convex subset of a space such that its image under a bounded linear 
operator belongs to a closed convex subset of another space. This problem has received a lot of attention because of its applications in signal processing, 
specifically in phase retrieval and other image restoration problems, see, e.g., \cite{H1989,S1987}. After that, it was found that the split feasibility problem can be used to 
model the intensity-modulated radiation therapy \cite{CS2008}, and many other fields \cite{B2002,B2004,CBMT2006}. That is also the reason to explain 
why in recent years many split-like problems have been widely and intensively studied, for instance, the split fixed point problem, the split optimization problem 
and the split variational inequality problem \cite{CGR2012,M2011} and others \cite{CWWW2015,M2013,M2013a,YMH2016}. Mathematically, these problems can 
be modelled in a common form, and so-called the split inverse problem (SIP), see in \cite[Sect. 2]{CGR2012}, in which there are a bounded linear operator $A$ 
from a space $X$ to another space $Y$ and two inverse problems IP1 and IP2 installed in $X$ and $Y$, respectively. More precisely, the problem (SIP) is of the form,
\begin{equation*}\label{SIP}
\begin{cases}
\mbox{find a point }~x^*\in X~\mbox{that solves IP1}\\
\mbox{such that}\\
\mbox{the point }~y^*=Ax^*\in Y~\mbox{solves IP2}.
\end{cases}
\eqno{\rm (SIP)}
\end{equation*}
Based on this general model, we can consider various types of split problems, even extend them to split equality-like problems.
Recall that the equilibrium problem \cite{BO1994,CH2005,MO1992} for a bifunction $f:C\times C \to \Re$ is to find a point $x^*\in C$ such that 
\begin{equation*}
f(x^*,y)\ge 0,~\forall y\in C,
\eqno{\rm (EP)}
\end{equation*}
where $C$ is a nonempty closed convex subset of a real Hilbert space $H$. Let us denote by $EP(f,C)$ the solution set of the problem (EP). It was well known that problem (EP) unifies in a simple form many 
mathematical models such as the variational inequalities, the fixed point problems, the optimization problems and the Nash equilibrium 
problems, see, e.g., \cite{BO1994,H2017OPTL,HM2017,HS18,H18,HCB18,MO1992}. It is here natural in this framework to study problem (SIP) when IP1 and IP2 are equilibrium 
problems to get the so-called split equilibrium problem (SEP). The problem of this form has also been considered recently in \cite{H2012,H2017GCOM,M2011}. 
More precisely, the problem (SEP) is stated as follows:\\

\vspace{0.1cm}
\noindent\textbf{Problem (SEP)}: \textit{Let $H_1,~H_2$ be two real Hilbert spaces and $C,~Q$ be two nonempty closed convex subsets of $H_1,~H_2$, respectively. 
Let $A:H_1\to H_2$ be a bounded linear operator. Let $f:C\times C\to \Re$ and $F:Q\times Q\to \Re$ be two bifunctions with 
$f(x,x)=0$ for all $x\in C$ and $F(u,u)=0$ for all $u\in Q$. The problem (SEP) is:
\begin{equation*}\label{SEP}
\begin{cases}
\mbox{Find}~x^*\in C~\mbox{such that} ~f(x^*,y)\ge 0,~\forall y\in C,\\
\mbox{and}~u^*=Ax^*\in Q~\mbox{solves} ~F(u^*,v)\ge 0,~\forall v\in Q.
\end{cases}
\eqno{\rm (SEP)}
\end{equation*}}\\

\vspace{0.1cm}
\noindent Let $\Omega$ denote the solution set of problem (SEP), i.e., 
$$\Omega=\left\{x^*\in C: f(x^*,y)\ge 0~\mbox{and}~F(Ax^*,v)\ge 0,~\forall y\in C,~\forall v\in Q\right\}.$$ 
Several methods for solving problem (SEP) can be found, for instance, in 
\cite{DMSK2015,DKK2014,H2012,H2016,KR2013,M2011}. As far as we know, almost solution methods for solving problem 
(SEP) are based on the proximal point method \cite{M1999} which consists of computing the resolvents $T^f_r$ and $T^F_s$ of 
bifunctions $f,~F$ with some $r,~s>0$. Recall that the resolvent \cite{CH2005} of a bifunction $f:C\times C\to \Re$ with some $r>0$ is defined by 
\begin{equation}\label{T}
T^f_r(x)=\left\{z\in C: f(z,y)+\frac{1}{r}\left\langle y-z,z-x\right\rangle\ge 0,~\forall y\in C\right\}.
\end{equation} 
Recently, the author of \cite{H2016} has introduced the extragradient-proximal method \cite[Corollary 1]{H2016}, for solving problem (SEP), 
which combines three methods including the proximal point method \cite{M1999}, the extended extragradient method \cite{FA1997,QMH2008} 
and the projection method. Also, recall 
here that the extended extragradient method \cite{FA1997,QMH2008} involves the computations of the following two optimization programs, 
for each $x\in C$,
\begin{equation}\label{EGM}
\begin{cases}
y=\arg\min\{ \lambda f(x, t) +\frac{1}{2}||x-t||^2:t\in C\},\\
z=\arg\min\{ \lambda f(y, t) +\frac{1}{2}||x-t||^2:t\in C\},
\end{cases}
\end{equation}
where some $\lambda>0$. It seems that the extended extragradient method (\ref{EGM}) can be easier to compute numerically than the proximal-point 
method, that is $T^f_r(x)$, which comes from the nonlinear inequality in (\ref{T}). However, the solving of two optimization programs in (\ref{EGM}) can 
be still costly if the bifunction $f$ and the feasible set $C$ have complex structures. Very recently, the author of \cite{H2017GCOM} 
has presented a new algorithm (see, \cite[Algorithm 3.1]{H2017GCOM}) for solving problem (SEP), namely the projected subgradient - proximal method (PSPM).  
More precisely, the PSPM is designed as follows:\\

\vspace{0.1cm}
\noindent \textbf{Algorithm (PSPM)}\\

\vspace{0.1cm}
\noindent \textbf{Initialization:} Choose $x_0\in C$ and the parameter sequences $\left\{\rho_n\right\}$, $\left\{\beta_n\right\}$, $\left\{\epsilon_n\right\}$, 
$\left\{r_n\right\}$, $\left\{\mu_n\right\}$ such that\\[.1in]

\indent (i) $\rho_n\ge \rho>0$, $\beta_n>0$, $\epsilon_n\ge 0$, $r_n\ge r>0$.\\

\indent (ii) $\sum\limits_{n\ge 1}\frac{\beta_n}{\rho_n}=+\infty$,\qquad  $\sum\limits_{n\ge 1}\frac{\beta_n\epsilon_n}{\rho_n}<+\infty$,\qquad 
$\sum\limits_{n\ge 1}\beta_n^2<+\infty$.\\

\indent (iii) $0<a\le \mu_n\le b<\frac{2}{||A||^2}$. \\[.1in]

\noindent\textbf{Iterative Steps:} Assume that $x_n\in C$ is known, calculate $x_{n+1}$ as follows:\\[.1in]

\indent \textbf{Step 1.} Select $w_n\in \partial_{\epsilon_n}f(x_n,.)(x_n)$, and compute 
$$\gamma_n=\max\left\{\rho_n,||w_n||\right\},\qquad \alpha_n=\frac{\beta_n}{\gamma_n},\qquad y_n=P_C(x_n-\alpha_n w_n).$$
\indent \textbf{Step 2.} Compute $ x_{n+1}=P_C\left(y_n-\mu_n A^*(I-T_{r_n}^F)Ay_n\right)$.\\
\vspace{0.1cm}

A modification of PSPM was also introduced in \cite[Algorithm 4.1]{H2017GCOM} where the prior knowledge of the norm of operator $A$ is not 
necessary. As be seen, the PSPM is constructed around the two methods, namely the projection method and the proximal point method \cite{M1999} 
(i.e., using the resolvent mapping $T_{r_n}^F$ of bifunction $F$). Finding a value of resolvent mapping in general is not easy. Then, the introduction of a computable 
and effective algorithm is necessary.\\[.1in]
\textit{\textbf{Our concern now is the following}: Can we construct an algorithm for solving problem (SEP) which only uses the projection method?}\\[.1in]
In this paper, as a continuity of the results in \cite{H2016,H2017GCOM}, we introduce a different algorithm for approximating solutions of problem 
(SEP) to answer the aforementioned question. Unlike the existing results, we only use the projection methods to design the algorithm. Theorem of weak 
convergence is proved under mild conditions. For further purpose, we consider some simple examples to demonstrate that several considered conditions 
are necessary in the formulation of theorem of convergence. The resulting algorithm is also extended to solve other related form-like problems. Finally, we 
perform several experiments to illustrate the numerical behavior of the new algorithm and aslo to compare it with others. The analyses 
in this paper are based on the ones in the recent work \cite{SS2011}. In this direction, a special case of problem (SEP) has been studied by the authors 
in \cite{AM14}. A generalization of problem (SEP) with fixed point problems and the methods of proximal-extragradient form can be found in \cite{DSA17}.\\[.1in]
An outline of this paper is as follows: In Sect. \ref{pre}, we recall some definitions and preliminary results for further use. Sect. \ref{main} 
deals with proposing the algorithm and analyzing its convergence. Some further remarks are presented in Sect. \ref{remarks} to justify 
the introduction of the assumptions in the convergence theorem. Sect. \ref{extension} introduces an extension of the resulting algorithm 
to the split common equilibrium problem. In Sect. \ref{example} we perform several numerical experiments to illustrate the computational 
efficiency of the proposed algorithm and also to compare it with others.
\section{Preliminaries}\label{pre}
Let $C$ be a nonempty closed convex subset of a real Hilbert space $H$. 
The metric projection $P_C:H\to C$ is defined by
$$
P_C(x)=\arg\min\left\{\left\|y-x\right\|:y\in C\right\}.
$$
Since $C$ is nonempty, closed and convex, $P_C(x)$ exists and is unique. From the definition of the metric projection, 
it is easy to show that $P_C$ has the following property.
\begin{lemma}\label{lemPC}\cite{GR1984}
{\rm (i)} $\left\langle P_C(x)-P_C(y),x-y \right\rangle \ge \left\|P_C (x)-P_C (y)\right\|^2,~\forall x,y\in H.$\\[.1in]
{\rm (ii)} $\left\|x-P_C (y)\right\|^2+\left\|P_C (y)-y\right\|^2\le \left\|x-y\right\|^2, \forall x\in C, y\in H.$\\[.1in]
{\rm (iii)} $z=P_C (x) \Leftrightarrow \left\langle x-z,y-z \right\rangle \le 0,\quad \forall y\in C.$
\end{lemma}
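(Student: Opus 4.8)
The plan is to establish part (iii) first, since it is the variational characterization of the projection from which (i) and (ii) follow by short computations. For the forward implication, suppose $z=P_C(x)$. Given any $y\in C$ and $t\in(0,1]$, convexity of $C$ gives $z+t(y-z)\in C$, so minimality of $z$ yields $\|z+t(y-z)-x\|^2\ge\|z-x\|^2$. Expanding the left-hand side and cancelling $\|z-x\|^2$ leaves $2t\langle z-x,y-z\rangle+t^2\|y-z\|^2\ge 0$; dividing by $t$ and letting $t\to 0^+$ gives $\langle z-x,y-z\rangle\ge 0$, i.e. $\langle x-z,y-z\rangle\le 0$. For the converse, assume $\langle x-z,y-z\rangle\le 0$ for all $y\in C$; then for every $y\in C$,
\[
\|y-x\|^2=\|y-z\|^2+2\langle y-z,z-x\rangle+\|z-x\|^2\ge\|z-x\|^2,
\]
since $\langle y-z,z-x\rangle=-\langle y-z,x-z\rangle\ge 0$. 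Hence $z$ minimizes $\|\cdot-x\|$ over $C$, so $z=P_C(x)$ by the uniqueness noted above.

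Next I would derive (i). Writing $p=P_C(x)$ and $q=P_C(y)$, apply (iii) to the pair $(x,p)$ with test point $q\in C$ to get $\langle x-p,q-p\rangle\le 0$, hence $\langle x-p,p-q\rangle\ge 0$; and apply (iii) to the pair $(y,q)$ with test point $p\in C$ to get $\langle y-q,p-q\rangle\le 0$. Subtracting the second inequality from the first gives $\langle (x-y)-(p-q),p-q\rangle\ge 0$, that is $\langle x-y,p-q\rangle\ge\|p-q\|^2$, which is exactly the statement of (i).

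Finally (ii) follows from (iii) as well. For $x\in C$ and $y\in H$ set $q=P_C(y)$ and expand
\[
\|x-y\|^2=\|x-q\|^2+2\langle x-q,q-y\rangle+\|q-y\|^2.
\]
Applying (iii) to the pair $(y,q)$ with test point $x\in C$ gives $\langle y-q,x-q\rangle\le 0$, equivalently $\langle x-q,q-y\rangle\ge 0$, so the middle term is nonnegative and $\|x-q\|^2+\|q-y\|^2\le\|x-y\|^2$.

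I do not expect a serious obstacle here; the only step needing a little care is the forward direction of (iii), where one must use convexity of $C$ to remain feasible along the segment $[z,y]$ and then pass to the limit $t\to 0^+$ to extract the first-order optimality condition. Everything else is sign bookkeeping, and the existence and uniqueness of $P_C(x)$ have already been recorded.
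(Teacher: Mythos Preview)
Your argument is correct and complete. The paper itself does not prove this lemma at all: it is stated with a citation to \cite{GR1984} (Goebel--Reich) and no proof is given. Your proposal supplies the standard self-contained derivation --- proving the variational characterization (iii) first by the segment/limit argument and then deducing (i) and (ii) from it by pure inner-product algebra --- which is exactly the classical proof one would find in the cited reference, so there is nothing to compare against or improve.
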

Now, we recall some concepts of monotonicity of a bifunction, see, e.g., \cite{BO1994,MO1992}. 
\begin{definition} A bifunction $f:C\times C\to \Re$ is said to be:\\[.1in]
(i) strongly monotone on $C$ if there exists a constant $\gamma>0$ such that
$$ f(x,y)+f(y,x)\le -\gamma ||x-y||^2,~\forall x,y\in C; $$
(ii) monotone on $C$ if 
$$ f(x,y)+f(y,x)\le 0,~\forall x,y\in C; $$
(iii) pseudomonotone on $C$ if 
$$ f(x,y)\ge 0 \Longrightarrow f(y,x)\le 0,~\forall x,y\in C.$$
(iv) strongly pseudomonotone on $C$ if there exists a constant $\gamma>0$ such that
$$ f(x,y)\ge 0 \Longrightarrow f(y,x)\le -\gamma ||x-y||^2,~\forall x,y\in C.$$
\end{definition}
From the above definitions, it is clear that the following implications hold,
$$ {\rm (i)}\Longrightarrow {\rm (ii)}\Longrightarrow {\rm (iii)}~{\rm and}~{\rm (i)}\Longrightarrow {\rm (iv)}\Longrightarrow {\rm (iii)}.$$
The converses in general are not true. Recall that a function $\varphi:C\to \Re$ is said to be convex on $C$ if for all $x,y\in C$ and $t\in [0,1]$,
$$ \varphi(tx+(1-t)y)\le t\varphi(x)+(1-t)\varphi (y).$$
The subdifferential of $\varphi$ at $x\in C$ is defined by
$$\partial \varphi (x)= \left\{w\in H:\varphi(y)-\varphi(x)\ge \left\langle w,y-x\right\rangle,~\forall y\in C\right\}.$$
An enlargement of the subdifferential is the $\epsilon$-subdifferential. The $\epsilon$-subdifferential of $\varphi$ at $x\in C$ is defined by
$$\partial_{\epsilon} \varphi (x)= \left\{w\in H:\varphi(y)-\varphi(x)+\epsilon\ge \left\langle w,y-x\right\rangle,~\forall y\in C\right\}. $$ 
It is clear that the $0$-subdifferential coincides with the subdifferential. Let $f:C\times C\to \Re$ be a bifunction. Throughout this paper,  
$\partial_{\epsilon} f(x,.)(x)$ is called the $\epsilon$-diagonal subdifferential of $f$ at $x\in C$.  \\[.1in]
We need the following technical lemma to prove the convergence of the proposed algorithms.
\begin{lemma}\cite{X2004}\label{lem.tech1}
Let $\left\{\nu_n\right\}$ and $\left\{\delta_n\right\}$ be two sequences of positive real numbers such that 
$$\nu_{n+1}\le \nu_n+\delta_n,~\forall n\ge 1,$$
with $\sum_{n\ge 1}\delta_n<+\infty$. Then the sequence $\left\{\nu_n\right\}$ is convergent.
\end{lemma}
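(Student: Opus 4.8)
The plan is to convert the ``almost monotone'' recursion into a genuinely monotone one by absorbing the tail of the series $\sum_{n\ge 1}\delta_n$ into the sequence. Concretely, I would introduce the auxiliary sequence
$$ a_n = \nu_n + \sum_{k=n}^{\infty}\delta_k, \qquad n\ge 1, $$
which is well defined precisely because $\sum_{n\ge 1}\delta_n<+\infty$.

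First I would check that $\{a_n\}$ is non-increasing: using the hypothesis $\nu_{n+1}\le \nu_n+\delta_n$,
$$ a_{n+1}=\nu_{n+1}+\sum_{k=n+1}^{\infty}\delta_k \le \nu_n+\delta_n+\sum_{k=n+1}^{\infty}\delta_k = \nu_n+\sum_{k=n}^{\infty}\delta_k = a_n. $$
Since $\nu_n>0$ and every $\delta_k>0$, we also have $a_n\ge 0$ for all $n$, so $\{a_n\}$ is a non-increasing sequence bounded below; hence it converges to some $a\ge 0$. Finally I would recover the convergence of $\{\nu_n\}$ itself from the fact that the tail of a convergent series vanishes, i.e. $\sum_{k=n}^{\infty}\delta_k\to 0$ as $n\to\infty$, so that
$$ \nu_n = a_n-\sum_{k=n}^{\infty}\delta_k \longrightarrow a-0=a, $$
which proves the claim.

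There is no genuine obstacle here; the only point needing a moment's care is that the auxiliary series $\sum_{k=n}^{\infty}\delta_k$ must be finite (guaranteed by the summability assumption) both in order for $a_n$ to make sense and in order for the tail to tend to $0$, while positivity of the $\nu_n$ and $\delta_n$ is used only to obtain the lower bound $a_n\ge 0$ (non-negativity would in fact already suffice). An alternative route avoiding the auxiliary sequence would be to iterate the inequality from an index $m$ to $n>m$, obtaining $\nu_n\le \nu_m+\sum_{k=m}^{n-1}\delta_k\le \nu_m+\sum_{k\ge m}\delta_k$, and then let $n\to\infty$ followed by $m\to\infty$ to conclude $\limsup_n\nu_n\le \liminf_n\nu_n$; but the monotone-auxiliary-sequence argument above is cleaner and is the one I would present.
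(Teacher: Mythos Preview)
Your proof is correct and is in fact the standard argument for this classical lemma. Note, however, that the paper does not supply its own proof of this statement: the lemma is quoted from the reference \cite{X2004} and used as a black box, so there is no ``paper's proof'' to compare against. Your auxiliary-sequence argument (absorbing the tail $\sum_{k\ge n}\delta_k$ to obtain a genuinely monotone, bounded-below sequence) is a clean and complete justification; the alternative $\limsup/\liminf$ route you sketch at the end is also valid and is the other common way this result is proved in the literature.
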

\section{Algorithm and convergence}\label{main}
In this section, we introduce a new algorithm for approximating solutions of problem (SEP). For designing our algorithm, throughout the paper, 
we take four non-negative parameter sequences $\left\{\rho_n\right\}$, $\left\{\beta_n\right\}$, $\left\{\epsilon_n\right\}$, and $\left\{\mu_n\right\}$ 
satisfying the following conditions.\\[.1in]
C1. $\rho_n\ge \rho>0$, $\epsilon_n \ge 0$, $\beta_n>0$.\\[.1in]
C2. $\sum\limits_{n\ge 1}\frac{\beta_n}{\rho_n}=+\infty$,\qquad  $\sum\limits_{n\ge 1}\frac{\beta_n\epsilon_n}{\rho_n}<+\infty$,\qquad 
$\sum\limits_{n\ge 1}\beta_n^2<+\infty$.\\[.1in]
C3. $0<a\le \mu_n\le \frac{1}{||A||^2}$. \\[.1in]
The following is the algorithm in details.\\
\noindent\rule{12.6cm}{0.4pt} 
\begin{algorithm}[Projection Method for SEPs].\label{alg1}\\
\noindent\rule{12.6cm}{0.4pt}\\
\textbf{Initialization:} Choose $x_0\in C$ and parameter sequences $\left\{\rho_n\right\}$, $\left\{\beta_n\right\}$, $\left\{\epsilon_n\right\}$, 
$\left\{\mu_n\right\}$ such that conditions C1-C3 above hold.\\
\noindent\rule{12.6cm}{0.4pt}\\
\textbf{Iterative Steps:} Assume that $x_n\in C$ is known, calculate $x_{n+1}$ as follows:

\textbf{Step 1.} Select $w_{n}\in \partial_{\epsilon_n}F(u_n,.)(u_n)$ where $u_n=P_Q(Ax_n)$, and compute 
$$\gamma_n=\frac{\beta_n}{\max\left\{\rho_n,||w_{n}||\right\}},\qquad y_n=P_Q(u_n-\gamma_n w_{n}).$$

\textbf{Step 2.} Compute $ z_{n}=P_C\left(x_n+\mu_n A^*(y_n-Ax_n)\right)$.

\textbf{Step 3.} Select $g_n\in \partial_{\epsilon_n}f(z_n,.)(z_n)$ and compute 
$$\alpha_{n}=\frac{\beta_n}{\max\left\{\rho_n,||g_n||\right\}},\qquad x_{n+1}=P_C(z_n-\alpha_{n} g_n).$$

Set $n:=n+1$ and go back to \textbf{Step 1}.
\end{algorithm}
\noindent\rule{12.6cm}{0.4pt}
\begin{remark}
Remark that since $\rho_n\ge \rho>0$, Algorithm \ref{alg1} is well defined. In view of the existing methods in 
\cite{CWWW2015,DMSK2015,DKK2014,DSA17,H2016,KR2013,M2011}, we see that they are almost designed 
in combining two methods: the proximal point method (that is to compute the resolvent of a bifunction \cite{M1999}) or the extended extragradient method 
(or the two-step proximal-like method \cite{FA1997,QMH2008}). Algorithm \ref{alg1} is close to the methods in 
\cite{H2017GCOM}. However, it only uses the projection method to design it and without the resolvent mapping $T^F_r$ of $F$ as in \cite{H2017GCOM}. 
As mentioned above, the using of resolvent mapping can be time-consuming in numerical computation when the bifunctions and the feasible sets have 
complicated structures.
\end{remark}
\begin{remark}
From the condition $\sum\limits_{n = 1}^\infty \beta_n^2<+\infty$ in (C2), we see that $\lim\limits_{n\to\infty}\beta_n=0$. 
This implies that the sequences of stepsizes $\left\{\gamma_n\right\}$ and $\left\{\alpha_n\right\}$ in Algorithm \ref{alg1} are decreasing. In general 
this strategy is not good. However, this assumption allows the algorithm to work without imposing the Lipschitz-type condition on the bifunction.
So doing, the stepsizes are suitably updated at each iteration and are independent on the Lipschitz-type constants.
\end{remark}

\noindent In order to establish the convergence of Algorithm \ref{alg1}, we assume that the bifunction $f:C\times C\to \Re$ satisfies the following conditions. \\[.1in]
A1. $f$ is pseudomonotone and $f(x,x)=0,~\forall x\in C$;\\[.1in]
A2. $f(x,.)$ is convex and lower semicontinuous on $C$ and $f(.,y)$ is weakly upper semicontinuous on $C$;\\[.1in]
A3. The $\epsilon$-diagonal subdifferential of $f$ is bounded on each bounded subset of $C$;\\[.1in]
A4. $f$ satisfies the following paramonotone condition
$$x\in EP(f,C),~y\in C,~f(y,x)=0\Longrightarrow y\in EP(f,C).$$
In addition, the bifunction $F:Q\times Q\to \Re$ is also assumed to satisfy the properties A1 - A4 above, but on the feasible set $Q$. 
Several remarks on these assumptions will be presented in the next section where it is seen that paramonotone condition A4 is necessary to establish the 
convergence of the algorithm. Under conditions A1 and A2, the two sets $EP(f,C)$ and $EP(F,Q)$ are closed and convex. Thus, 
since $A$ is linear, the solution set $\Omega$ of problem (SEP) is also closed and convex. In this paper, $\Omega$ is assumed to be nonempty, and 
so the projection $P_\Omega(u)$ is well defined for each point $u\in H_1$. To investigate the asymptotic behavior of the sequence $\left\{x_n\right\}$ 
generated by Algorithm \ref{alg1}, we need the following lemmas.

\begin{lemma}\label{lem0}
Let $x^*\in EP(f,C)$. Then we have the following estimate for each $n\ge 0$,
\begin{equation*}
||x_{n+1}-x^*||^2\le ||z_n-x^*||^2+2\alpha_n f(z_n,x^*)-||x_{n+1}-z_n||^2+\delta_n,
\end{equation*}
where $\delta_n=\frac{2\beta_n\epsilon_n}{\rho_n}+2\beta_n^2$.
\end{lemma}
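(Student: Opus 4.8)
The plan is to unwind the two projection steps that produce $x_{n+1}$ from $z_n$, namely Step~3 of Algorithm~\ref{alg1}, and to exploit the $\epsilon$-diagonal subgradient inequality for $f$ together with the pseudomonotonicity of $f$ at the solution point $x^*$. First I would write $x_{n+1}=P_C(z_n-\alpha_n g_n)$ and apply the firmly-nonexpansive-type estimate Lemma~\ref{lemPC}(ii) with the point $x^*\in C$ and $y=z_n-\alpha_n g_n$; this gives
\begin{equation*}
\|x_{n+1}-x^*\|^2 \le \|z_n-\alpha_n g_n-x^*\|^2 - \|x_{n+1}-(z_n-\alpha_n g_n)\|^2.
\end{equation*}
Expanding both squares and collecting terms, the cross terms involving $g_n$ combine into $2\alpha_n\langle g_n, x^*-x_{n+1}\rangle$, and I would regroup the remaining quadratic pieces as $\|z_n-x^*\|^2 - \|x_{n+1}-z_n\|^2$. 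So the intermediate identity is
\begin{equation*}
\|x_{n+1}-x^*\|^2 \le \|z_n-x^*\|^2 - \|x_{n+1}-z_n\|^2 + 2\alpha_n\langle g_n, x^*-x_{n+1}\rangle.
\end{equation*}

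Next I would split the inner product as $\langle g_n, x^*-x_{n+1}\rangle = \langle g_n, x^*-z_n\rangle + \langle g_n, z_n-x_{n+1}\rangle$. For the first piece, the defining property of $g_n\in\partial_{\epsilon_n}f(z_n,\cdot)(z_n)$ with test point $y=x^*\in C$ yields $f(z_n,x^*)-f(z_n,z_n)+\epsilon_n \ge \langle g_n, x^*-z_n\rangle$, and since $f(z_n,z_n)=0$ this gives $\langle g_n, x^*-z_n\rangle \le f(z_n,x^*)+\epsilon_n$. For the second piece, I would bound it crudely by Cauchy--Schwarz: $2\alpha_n\langle g_n, z_n-x_{n+1}\rangle \le 2\alpha_n\|g_n\|\cdot\|z_n-x_{n+1}\| \le \alpha_n^2\|g_n\|^2 + \|z_n-x_{n+1}\|^2$, where the last step uses $2ab\le a^2+b^2$; note the $\|z_n-x_{n+1}\|^2$ term here will cancel the corresponding negative term from the projection inequality. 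The remaining terms to control are then $2\alpha_n f(z_n,x^*) + 2\alpha_n\epsilon_n + \alpha_n^2\|g_n\|^2$.

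The last step is to absorb $2\alpha_n\epsilon_n$ and $\alpha_n^2\|g_n\|^2$ into $\delta_n$ using the definition of the stepsize $\alpha_n=\beta_n/\max\{\rho_n,\|g_n\|\}$. Since $\max\{\rho_n,\|g_n\|\}\ge\rho_n$, we get $\alpha_n \le \beta_n/\rho_n$, so $2\alpha_n\epsilon_n \le 2\beta_n\epsilon_n/\rho_n$; and since $\max\{\rho_n,\|g_n\|\}\ge\|g_n\|$, we get $\alpha_n\|g_n\|\le\beta_n$, hence $\alpha_n^2\|g_n\|^2 \le \beta_n^2 \le 2\beta_n^2$ (or simply $\beta_n^2$, but the statement writes $\delta_n=2\beta_n\epsilon_n/\rho_n+2\beta_n^2$, so a slightly looser bound matches). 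Putting everything together produces exactly
\begin{equation*}
\|x_{n+1}-x^*\|^2 \le \|z_n-x^*\|^2 + 2\alpha_n f(z_n,x^*) - \|x_{n+1}-z_n\|^2 + \delta_n,
\end{equation*}
as claimed. I do not expect a genuine obstacle here; the only mild subtlety is handling the case $\|g_n\|=0$ in the stepsize (then $\alpha_n=\beta_n/\rho_n$ and the $\|g_n\|$ terms vanish, so the bound is trivially fine), and making sure the sign of $f(z_n,x^*)$ is kept as is — it is \emph{not} discarded at this stage, because pseudomonotonicity of $f$ (to conclude $f(z_n,x^*)\le 0$, using $f(x^*,z_n)\ge 0$) will only be invoked in the subsequent convergence argument, not in this lemma.
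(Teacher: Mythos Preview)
Your outline is almost right, but there is a genuine bookkeeping error in the last step that makes the conclusion as stated not follow. After you apply $2ab\le a^2+b^2$ to $2\alpha_n\|g_n\|\,\|z_n-x_{n+1}\|$, the resulting $+\|z_n-x_{n+1}\|^2$ cancels the $-\|x_{n+1}-z_n\|^2$ coming from the projection estimate (you say this yourself). What you are then left with is only
\[
\|x_{n+1}-x^*\|^2 \le \|z_n-x^*\|^2 + 2\alpha_n f(z_n,x^*) + \frac{2\beta_n\epsilon_n}{\rho_n} + \beta_n^2,
\]
\emph{without} the term $-\|x_{n+1}-z_n\|^2$. So ``putting everything together'' does not produce the stated inequality; that term is needed later (it is used in Lemma~\ref{lem2}(ii) to conclude $\|x_{n+1}-z_n\|\to 0$), so you cannot simply drop it.

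The fix is to bound the cross term differently, using one extra ingredient: since $z_n\in C$ and $P_C$ is nonexpansive,
\[
\|x_{n+1}-z_n\|=\|P_C(z_n-\alpha_n g_n)-P_C(z_n)\|\le \alpha_n\|g_n\|\le \beta_n.
\]
Then
\[
2\alpha_n\langle g_n,z_n-x_{n+1}\rangle \le 2\alpha_n\|g_n\|\,\|z_n-x_{n+1}\| \le 2\alpha_n^2\|g_n\|^2 \le 2\beta_n^2,
\]
and now the $-\|x_{n+1}-z_n\|^2$ term survives intact, giving exactly the claimed bound with $\delta_n=\dfrac{2\beta_n\epsilon_n}{\rho_n}+2\beta_n^2$ (this also explains the factor $2$ on $\beta_n^2$ that you noted was ``slightly loose''). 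The paper itself does not spell out the argument --- it just cites \cite[inequality~(8)]{H2017GCOM} --- so your approach is the natural one once this step is corrected.
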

\begin{proof}
See, e.g., \cite[inequality (8)]{H2017GCOM}. 
\end{proof}
As in Lemma \ref{lem0} but with $Ax^*\in EP(F,Q)$, from the definition of $y_n$, we also have the following estimate for each $n\ge 0$,
\begin{equation*}
||y_n-Ax^*||^2\le ||u_n-Ax^*||^2+2\gamma_n F(u_n,Ax^*)-||y_n-u_n||^2+\delta_n,
\end{equation*}
where $\delta_n$ is defined as in Lemma \ref{lem0}. Thus 
\begin{equation}\label{eq:1}
||y_n-Ax^*||^2\le ||u_n-Ax^*||^2+2\gamma_n F(u_n,Ax^*)+\delta_n.
\end{equation}
\begin{lemma}\label{lem1} Let $x^*\in \Omega$. Then the following inequality holds for each $n\ge 0$,
\begin{eqnarray*}
||x_{n+1}-x^*||^2&\le& ||x_n-x^*||^2-\mu_n ||u_n-Ax_n||^2-||x_{n+1}-z_n||^2\nonumber\\
&&+2\mu_n \gamma_n F(u_n,Ax^*)+2\alpha_n f(z_n,x^*)+(1+\mu_n)\delta_n.
\end{eqnarray*}
\end{lemma}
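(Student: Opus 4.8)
The plan is to chain three ingredients already available: Lemma~\ref{lem0} applied at the point $x^*$ (which lies in $EP(f,C)$ because $x^*\in\Omega$), inequality~\eqref{eq:1} which controls $\|y_n-Ax^*\|^2$, and the basic projection estimate Lemma~\ref{lemPC}(ii). The only genuinely ``algebraic'' step is a polarization identity; everything else is substitution and cancellation.

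First I would estimate $\|z_n-x^*\|^2$. Writing $p_n:=x_n+\mu_nA^*(y_n-Ax_n)$, Lemma~\ref{lemPC}(ii) with $x=x^*\in C$ gives $\|z_n-x^*\|^2\le\|p_n-x^*\|^2$. Expanding $p_n-x^*=(x_n-x^*)+\mu_nA^*(y_n-Ax_n)$ and moving $A^*$ across the inner product yields
$$\|z_n-x^*\|^2\le\|x_n-x^*\|^2+2\mu_n\langle Ax_n-Ax^*,\,y_n-Ax_n\rangle+\mu_n^2\|A^*(y_n-Ax_n)\|^2.$$
For the cross term I use $2\langle Ax_n-Ax^*,y_n-Ax_n\rangle=\|y_n-Ax^*\|^2-\|Ax_n-Ax^*\|^2-\|y_n-Ax_n\|^2$ (since the two vectors sum to $y_n-Ax^*$), and for the last term I use $\|A^*\|=\|A\|$ to get $\mu_n^2\|A^*(y_n-Ax_n)\|^2\le\mu_n^2\|A\|^2\|y_n-Ax_n\|^2$. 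Collecting the $\|y_n-Ax_n\|^2$ contributions produces the coefficient $-\mu_n(1-\mu_n\|A\|^2)$, which is $\le 0$ by condition C3, hence may be dropped:
$$\|z_n-x^*\|^2\le\|x_n-x^*\|^2+\mu_n\|y_n-Ax^*\|^2-\mu_n\|Ax_n-Ax^*\|^2.$$

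Next I would insert the control of $\|y_n-Ax^*\|^2$. Since $x^*\in\Omega$ we have $Ax^*\in EP(F,Q)$, so~\eqref{eq:1} applies; moreover $u_n=P_Q(Ax_n)$ with $Ax^*\in Q$, so Lemma~\ref{lemPC}(ii) gives $\|u_n-Ax^*\|^2\le\|Ax_n-Ax^*\|^2-\|u_n-Ax_n\|^2$. Substituting~\eqref{eq:1} and this last inequality into the previous display, the two occurrences of $\mu_n\|Ax_n-Ax^*\|^2$ cancel, leaving
$$\|z_n-x^*\|^2\le\|x_n-x^*\|^2-\mu_n\|u_n-Ax_n\|^2+2\mu_n\gamma_nF(u_n,Ax^*)+\mu_n\delta_n.$$
Finally, plugging this bound into the estimate of Lemma~\ref{lem0} (legitimate because $x^*\in EP(f,C)$) and merging the $\delta_n$ terms into $(1+\mu_n)\delta_n$ gives precisely the asserted inequality. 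I do not expect a real obstacle here: the points that need care are the use of $\|A^*\|=\|A\|$, the sign restriction $\mu_n\|A\|^2\le 1$ from C3 that allows the $\|y_n-Ax_n\|^2$ term to be discarded, and checking that the $\|Ax_n-Ax^*\|^2$ terms cancel so the estimate closes back onto $\|x_n-x^*\|^2$.
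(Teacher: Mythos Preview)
Your proposal is correct and follows essentially the same route as the paper: estimate $\|z_n-x^*\|^2$ via the projection inequality, expand the square, handle the cross term by the polarization identity $2\langle Ax_n-Ax^*,y_n-Ax_n\rangle=\|y_n-Ax^*\|^2-\|Ax_n-Ax^*\|^2-\|y_n-Ax_n\|^2$, discard the nonpositive $-\mu_n(1-\mu_n\|A\|^2)\|y_n-Ax_n\|^2$ using C3, insert \eqref{eq:1} together with the projection estimate for $u_n=P_Q(Ax_n)$, and finally feed the resulting bound on $\|z_n-x^*\|^2$ into Lemma~\ref{lem0}. The only cosmetic differences are that the paper invokes firm nonexpansiveness of $P_Q$ (rather than Lemma~\ref{lemPC}(ii)) to obtain $\|u_n-Ax^*\|^2\le\|Ax_n-Ax^*\|^2-\|u_n-Ax_n\|^2$, and it merges \eqref{eq:1} with that inequality \emph{before} applying the polarization identity, whereas you do it after; the content is identical.
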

\begin{proof}
From the definition of $z_n$ and the nonexpansiveness of $P_C$, 
we obtain 
\begin{eqnarray}
||z_n-x^*||^2&=&||P_C\left(x_n+\mu_n A^*(y_n-Ax_n)\right)-P_C(x^*)||^2\nonumber\\ 
&\le &||x_n+\mu_n A^*(y_n-Ax_n)-x^*||^2\nonumber\\
&=&||x_n-x^*||^2+\mu_n^2||A^*(y_n-Ax_n)||^2+2\mu_n \left\langle x_n-x^*,A^*(y_n-Ax_n)\right\rangle\nonumber\\ 
&\le &||x_n-x^*||^2+\mu_n^2||A||^2||y_n-Ax_n||^2+2\mu_n \left\langle A(x_n-x^*),y_n-Ax_n\right\rangle.\nonumber\\\label{eq:2}
\end{eqnarray}
Now, we estimate the term $\left\langle A(x_n-x^*),y_n-Ax_n\right\rangle$ in inequality (\ref{eq:2}). Since $P_Q$ 
is firmly nonexpansive, we obtain 
\begin{eqnarray*}
2||u_n-Ax^*||^2&=&2||P_Q(Ax_n)-P_Q(Ax^*)||^2\nonumber\\ 
&\le &2\left\langle P_Q(Ax_n)-P_Q(Ax^*),Ax_n-Ax^*\right\rangle\nonumber\\
&=&2\left\langle u_n-Ax^*,Ax_n-Ax^*\right\rangle\nonumber\\
&=&||u_n-Ax^*||^2+||Ax_n-Ax^*||^2-||u_n-Ax_n||^2.
\end{eqnarray*}
Thus
\begin{equation}\label{eq:3}
||u_n-Ax^*||^2\le ||Ax_n-Ax^*||^2-||u_n-Ax_n||^2.
\end{equation}
Combining relations (\ref{eq:1}) and (\ref{eq:3}) we obtain 
\begin{equation}\label{eq:5}
||y_n-Ax^*||^2\le ||Ax_n-Ax^*||^2-||u_n-Ax_n||^2+2\gamma_n F(u_n,Ax^*)+\delta_n,
\end{equation}
which, together with the following equality
$$
2 \left\langle A(x_n-x^*),y_n-Ax_n\right\rangle=||y_n-Ax^*||^2-||Ax_n-Ax^*||^2-||y_n-Ax_n||^2,
$$
implies that 
\begin{equation}\label{eq:6}
2 \left\langle A(x_n-x^*),y_n-Ax_n\right\rangle \le -||u_n-Ax_n||^2-||y_n-Ax_n||^2+2\gamma_n F(u_n,Ax^*)+\delta_n.
\end{equation}
Combining relations (\ref{eq:2}) and (\ref{eq:6}), we get 
\begin{eqnarray}
||z_n-x^*||^2&\le&||x_n-x^*||^2-\mu_n(1-\mu_n||A||^2)||y_n-Ax_n||^2-\mu_n ||u_n-Ax_n||^2 \nonumber\\
&&+2\mu_n \gamma_n F(u_n,Ax^*)+\mu_n \delta_n\nonumber\\ 
&\le&||x_n-x^*||^2-\mu_n ||u_n-Ax_n||^2+2\mu_n \gamma_n F(u_n,Ax^*)+\mu_n \delta_n,\label{eq:7}
\end{eqnarray}
where the last inequality follows from assumption C3 that $\mu_n(1-\mu_n||A||^2)\ge 0$. This together with Lemma \ref{lem0} 
implies the desired conclusion. Lemma \ref{lem1} is proved.
\end{proof}
\begin{lemma}\label{lem2} Let $\left\{x_n\right\}$ be the sequence generated by Algorithm \ref{alg1}. Then the following properties are satisfied.\\[.1in]
{\rm (i)} The sequence $\left\{||x_n-x^*||^2\right\}$ is convergent for each $x^*\in \Omega$, and the sequence $\left\{x_n\right\}$ is bounded.\\[.1in]
{\rm (ii)} $ \lim_{n\to\infty} ||x_{n+1}-z_n||^2=\lim_{n\to\infty} ||u_n-Ax_n||^2=0$, and the sequences $\left\{z_n\right\}$, $\left\{u_n\right\}$ are bounded.\\[.1in]
{\rm (iii)} $\lim_{n\to\infty}\sup f(x_n,x^*)=\lim_{n\to\infty}\sup F(u_n,Ax^*)=0$ for each $x^*\in \Omega$.
\end{lemma}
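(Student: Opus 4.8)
The strategy is to read everything off the one-step estimate in Lemma \ref{lem1}, observing that for $x^*\in\Omega$ the two bifunction terms appearing there are nonpositive, and then to combine a telescoping argument with Lemma \ref{lem.tech1} and the summability built into C1--C2.

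\emph{Part (i).} Since $x^*\in\Omega$ we have $x^*\in EP(f,C)$ and $Ax^*\in EP(F,Q)$; as $z_n\in C$ and $u_n=P_Q(Ax_n)\in Q$ for every $n$, assumption A1 (pseudomonotonicity) gives $f(x^*,z_n)\ge0\Rightarrow f(z_n,x^*)\le0$ and $F(Ax^*,u_n)\ge0\Rightarrow F(u_n,Ax^*)\le0$. Discarding these nonpositive terms in Lemma \ref{lem1} leaves
\begin{equation*}
\|x_{n+1}-x^*\|^2\le\|x_n-x^*\|^2-\mu_n\|u_n-Ax_n\|^2-\|x_{n+1}-z_n\|^2+(1+\mu_n)\delta_n ,
\end{equation*}
with $\delta_n=\frac{2\beta_n\epsilon_n}{\rho_n}+2\beta_n^2$. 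By C3, $1+\mu_n\le1+\|A\|^{-2}$, and by C2 both $\sum_n\beta_n^2$ and $\sum_n\beta_n\epsilon_n/\rho_n$ converge, so $\sum_n(1+\mu_n)\delta_n<\infty$. Hence $\|x_{n+1}-x^*\|^2\le\|x_n-x^*\|^2+(1+\mu_n)\delta_n$, and Lemma \ref{lem.tech1} gives convergence of $\{\|x_n-x^*\|^2\}$; in particular $\{x_n\}$ is bounded.

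\emph{Part (ii).} In the displayed inequality above, move $\mu_n\|u_n-Ax_n\|^2$ and $\|x_{n+1}-z_n\|^2$ to the left and sum over $n\ge0$; the right-hand side telescopes and, using Part (i) and $\sum_n(1+\mu_n)\delta_n<\infty$, stays bounded, so $\sum_n\big(\mu_n\|u_n-Ax_n\|^2+\|x_{n+1}-z_n\|^2\big)<\infty$. Since $\mu_n\ge a>0$, both $\|u_n-Ax_n\|\to0$ and $\|x_{n+1}-z_n\|\to0$. Boundedness of $\{u_n\}$ follows from $u_n=P_Q(Ax_n)$, the nonexpansiveness of $P_Q$ and the boundedness of $\{Ax_n\}$; boundedness of $\{z_n\}$ follows from (\ref{eq:7}), which together with $F(u_n,Ax^*)\le0$ gives $\|z_n-x^*\|^2\le\|x_n-x^*\|^2+\mu_n\delta_n$. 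For later use I would also record $\|y_n-u_n\|\le\gamma_n\|w_n\|\le\beta_n\to0$ (because $\gamma_n\|w_n\|=\beta_n\|w_n\|/\max\{\rho_n,\|w_n\|\}\le\beta_n$), hence $\|y_n-Ax_n\|\to0$, and then $\|z_n-x_n\|\le\mu_n\|A\|\,\|y_n-Ax_n\|\to0$, so $\|x_{n+1}-x_n\|\to0$.

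\emph{Part (iii).} That both $\limsup$'s are $\le0$ is immediate: $x_n\in C$, $u_n\in Q$ and A1 force $f(x_n,x^*)\le0$ and $F(u_n,Ax^*)\le0$ for all $n$. For the reverse inequality, return to Lemma \ref{lem1}, keep the nonnegative terms $2\alpha_n(-f(z_n,x^*))$ and $2\mu_n\gamma_n(-F(u_n,Ax^*))$, and sum as in Part (ii) to obtain $\sum_n\alpha_n(-f(z_n,x^*))<\infty$ and $\sum_n\mu_n\gamma_n(-F(u_n,Ax^*))<\infty$. The crucial fact is then $\sum_n\alpha_n=+\infty$ and $\sum_n\mu_n\gamma_n=+\infty$: by A3 and the boundedness of $\{z_n\}$ (resp. $\{u_n\}$) from Part (ii), $\{g_n\}$ (resp. $\{w_n\}$) is bounded by some $M$, so $\max\{\rho_n,\|g_n\|\}\le\rho_n+M\le\rho_n(1+M/\rho)$ by C1, whence $\alpha_n\ge(1+M/\rho)^{-1}\beta_n/\rho_n$ and $\sum_n\alpha_n=+\infty$ by C2; similarly $\sum_n\mu_n\gamma_n\ge a\sum_n\gamma_n=+\infty$. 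Since $-f(z_n,x^*)\ge0$ and $-F(u_n,Ax^*)\ge0$, divergence of these two series forces $\liminf_n(-f(z_n,x^*))=0$ and $\liminf_n(-F(u_n,Ax^*))=0$, i.e. $\limsup_n f(z_n,x^*)=0$ and $\limsup_n F(u_n,Ax^*)=0$; the second is exactly the stated conclusion for $F$, and the statement for $f$ is then obtained from $\limsup_n f(z_n,x^*)=0$ together with $\|x_n-z_n\|\to0$ (Part (ii)), the semicontinuity in A2, and $f(x_n,x^*)\le0$.

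I expect Part (iii) to be the main obstacle. The identity $\sum_n\alpha_n=+\infty$ is precisely where A3 and C1--C2 are genuinely used (one must bound $\|g_n\|$ on the bounded set carrying $\{z_n\}$ and then compare $\sum_n\alpha_n$ with $\sum_n\beta_n/\rho_n$), and the passage of the $\limsup$ statement from the subgradient iterates $z_n$, $u_n$ to the iterates $x_n$, $u_n$ asked for in the lemma has to be handled carefully using $\|x_n-z_n\|\to0$.
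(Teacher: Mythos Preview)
Your argument follows the paper's proof essentially step for step: drop the nonpositive bifunction terms in Lemma~\ref{lem1} and apply Lemma~\ref{lem.tech1} for (i); telescope to get summability of $\mu_n\|u_n-Ax_n\|^2+\|x_{n+1}-z_n\|^2$ and of $\alpha_n(-f(z_n,x^*))$, $\mu_n\gamma_n(-F(u_n,Ax^*))$ for (ii) and (iii); then use A3 plus C1--C2 to show $\sum_n\alpha_n=\sum_n\gamma_n=+\infty$ and conclude $\limsup f(z_n,x^*)=\limsup F(u_n,Ax^*)=0$. Your lower bound $\alpha_n\ge(1+M/\rho)^{-1}\beta_n/\rho_n$ is a harmless variant of the paper's $\alpha_n\ge(\rho/L)\,\beta_n/\rho_n$.

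One point worth flagging: you correctly noticed that the statement of part (iii) is written with $f(x_n,x^*)$ while the natural output of the argument is $\limsup f(z_n,x^*)=0$. In fact the paper's own proof stops at $z_n$ and the application in Theorem~\ref{theo1} also uses $z_n$, so the ``$x_n$'' in the lemma statement is a typo. Your attempt to pass from $z_n$ to $x_n$ via ``the semicontinuity in A2'' is the only soft spot: weak upper semicontinuity of $f(\cdot,x^*)$ does \emph{not} by itself let you transfer $\limsup f(z_n,x^*)=0$ to $\limsup f(x_n,x^*)=0$ along $\|x_n-z_n\|\to0$ (it gives inequalities in the wrong direction). This is not a gap in the mathematics needed downstream---just be aware that what is actually proved, both by you and by the paper, is the $z_n$ version, and that is all Theorem~\ref{theo1} requires. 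Your extra observations in (ii) ($\|y_n-u_n\|\le\beta_n\to0$, $\|z_n-x_n\|\to0$) are correct and handy, though the paper does not record them here.
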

\begin{proof}
(i) Since $x^*\in EP(f,C)$ and $x_n\in C$, we have $f(x^*,x_n)\ge 0$. Then $f(x_n,x^*)\le 0$ by the pseudomonotonicity of $f$. Similarly, 
from $u_n\in Q$ and $Ax^*\in EP(F,Q)$, we also have $F(u_n,Ax^*)\le 0$. These together with Lemma \ref{lem1}, $\mu_n>0$, $\alpha_n>0$, 
$\gamma_n>0$ imply that 
\begin{equation}\label{eq7a}
||x_{n+1}-x^*||^2\le||x_n-x^*||^2+(1+\mu_n)\delta_n.
\end{equation}
Using Lemma \ref{lem.tech1} and the fact that $\sum_{n\ge 1}(1+\mu_n)\delta_n<+\infty$, it follows that the sequence $\left\{||x_n-x^*||^2\right\}$ 
converges and thus that $\left\{x_n\right\}$ is bounded.\\[.1in]
(ii) For the sake of simplicity, we set
$$ M_n=\mu_n ||u_n-Ax_n||^2+||x_{n+1}-z_n||^2\ge 0, $$
$$ N_n=- 2\mu_n \gamma_n F(u_n,Ax^*)-2\alpha_n f(z_n,x^*)\ge 0.$$
Thus, the inequality in Lemma \ref{lem1} can be shortly rewritten as 
$$M_n+N_n\le||x_n-x^*||^2-||x_{n+1}-x^*||^2+(1+\mu_n)\delta_n.$$
Let $N\ge 1$ be a fixed integer number. Summing up these inequalities for $n=1,2,\ldots,N$, we obtain
$$0\le \sum_{n=1}^N M_n+\sum_{n=1}^N N_n\le||x_1-x^*||^2-||x_{N+1}-x^*||^2+\sum_{n=1}^N (1+\mu_n)\delta_n. $$
This is true for all $N\ge 1$. Passing to the limit in the last inequality as $N\to\infty$, and using Lemma \ref{lem2}(i) and the fact that
$\sum_{n\ge 1}(1+\mu_n)\delta_n<+\infty$ , we obtain
$${\rm (S1)}\quad\sum_{n=1}^{\infty} M_n<+\infty,\qquad {\rm (S2)}\quad\sum_{n=1}^{\infty}N_n<+\infty.$$
From (S1) and the definition of $M_n$, we obtain 
\begin{equation}\label{eq:8}
\lim_{n\to\infty}||x_{n+1}-z_n||^2=0
\end{equation}
and $\lim_{n\to\infty} \mu_n ||u_n-Ax_n||^2=0.$ This together with the hypothesis $\mu_n\ge a>0$ implies that 
\begin{equation}\label{eq:9}
\lim_{n\to\infty} ||u_n-Ax_n||^2 =0.
\end{equation}
Thus, from the boundedness of $\left\{x_n\right\}$ and the linearity of operator $A$, we also obtain that the two sequences $\left\{z_n\right\}$, $\left\{u_n\right\}$ 
are bounded.\\[.1in]
(iii) From (S2), the definition of $N_n$, and the facts $- \mu_n \gamma_n F(u_n,Ax^*)\ge 0$ and $-2\alpha_n f(z_n,x^*)\ge 0$ for all $n\ge 0$, we obtain 
 $${\rm (S3)}\quad \sum_{n=1}^{\infty} \alpha_n [-f(z_n,x^*)]<+\infty,\qquad {\rm (S4)}\quad \sum_{n=1}^{\infty}\mu_n \gamma_n [-F(u_n,Ax^*)]<+\infty.$$
Hence from (S4) and hypothesis C3, we can deduce that 
$$ {\rm (S5)}\quad \sum_{n=1}^{\infty}\gamma_n [-F(u_n,Ax^*)]<+\infty. $$
On the other hand, since $\left\{z_n\right\}$ is bounded, it follows from assumption A3 that $\left\{g_n\right\}$ is also bounded. Thus, there exists $L\ge \rho>0$ 
such that $||g_n||\le L$, and from the definition of $\alpha_n$ and C1, we can write
$$ \alpha_n=\frac{\beta_n}{\max\left\{\rho_n,||w_n||\right\}}=\frac{\beta_n}{\rho_n\max\left\{1,||w_n||/\rho_n\right\}}
\ge \frac{\beta_n}{\rho_n} \frac{\rho}{L}.$$
This together with (S3) and $\frac{\rho}{L}>0$ implies that
$$ \sum_{n=1}^{\infty}\frac{\beta_n}{\rho_n}\left[-f(z_n,x^*)\right]<+\infty.$$
Consequently, under hypothesis C2 we obtain that $\lim_{n\to\infty}\inf \left[-f(z_n,x^*)\right]=0$, i.e.,
$$\lim_{n\to\infty}\sup f(z_n,x^*)=0.$$
 Similarly, from the boundedness of $\left\{u_n\right\}$ and (S5), we also get that 
$$\lim_{n\to\infty}\sup F(u_n,Ax^*)=0.$$
This completes the proof of Lemma \ref{lem2}.
\end{proof}
Now, we prove the convergence of Algorithm \ref{alg1}.
\begin{theorem}\label{theo1}
The whole sequence $\left\{x_n\right\}$ generated by Algorithm \ref{alg1} converges weakly to some solution $x^\dagger$ of problem (SEP). 
Moreover, $x^\dagger=\lim_{n\to \infty}P_{\Omega}(x_n)$.
\end{theorem}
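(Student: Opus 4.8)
The plan is to run the classical three-step scheme for quasi-Fej\'er sequences: extract from the lemmas already proved a Fej\'er-type inequality together with an asymptotic-regularity property, show that every weak cluster point of $\{x_n\}$ solves (SEP), and then finish by an Opial-type argument complemented by the convergence of the shadow sequence $\{P_\Omega(x_n)\}$.

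\emph{Step 1: asymptotic regularity.} From Lemma \ref{lem2} we already have that $\{\|x_n-x^*\|\}$ converges for every $x^*\in\Omega$, that $\{x_n\}$ is bounded, and that $\|x_{n+1}-z_n\|\to0$ and $\|u_n-Ax_n\|\to0$. I would add two further limits. By Step 1 of Algorithm \ref{alg1} and the nonexpansiveness of $P_Q$, $\|y_n-u_n\|=\|P_Q(u_n-\gamma_nw_n)-P_Q(u_n)\|\le\gamma_n\|w_n\|\le\beta_n\to0$; hence $\|y_n-Ax_n\|\le\|y_n-u_n\|+\|u_n-Ax_n\|\to0$, and then, since $x_n\in C$, Step 2 gives $\|z_n-x_n\|=\|P_C(x_n+\mu_nA^*(y_n-Ax_n))-P_C(x_n)\|\le\mu_n\|A\|\,\|y_n-Ax_n\|\to0$, whence also $\|x_{n+1}-x_n\|\to0$. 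Consequently, if a subsequence satisfies $x_{n_k}\rightharpoonup\hat x$, then $z_{n_k}\rightharpoonup\hat x$, $u_{n_k}=P_Q(Ax_{n_k})\rightharpoonup A\hat x$ and $y_{n_k}\rightharpoonup A\hat x$, while $\hat x\in C$ and $A\hat x\in Q$ since $C$ and $Q$ are weakly closed.

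\emph{Step 2: weak cluster points lie in $\Omega$.} Fix $x^*\in\Omega$. The computations inside the proof of Lemma \ref{lem2}(iii) actually yield $\sum_n\frac{\beta_n}{\rho_n}[-f(z_n,x^*)]<+\infty$ and $\sum_n\frac{\beta_n}{\rho_n}[-F(u_n,Ax^*)]<+\infty$, both summands being nonnegative by A1 (because $x^*\in EP(f,C)$, $z_n\in C$ and $Ax^*\in EP(F,Q)$, $u_n\in Q$). Adding the two series and using $\sum_n\frac{\beta_n}{\rho_n}=+\infty$, one obtains a subsequence along which $f(z_{n_k},x^*)\to0$ and $F(u_{n_k},Ax^*)\to0$ at the same time; passing to a further subsequence with $z_{n_k}\rightharpoonup\hat x$, the weak upper semicontinuity in A2 gives $f(\hat x,x^*)\ge0$ and $F(A\hat x,Ax^*)\ge0$, whereas the pseudomonotonicity in A1, applied at $(x^*,\hat x)$ and at $(Ax^*,A\hat x)$, gives the reverse inequalities; hence $f(\hat x,x^*)=0$ and $F(A\hat x,Ax^*)=0$. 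At this point the paramonotonicity A4 is invoked and turns these two equalities into $\hat x\in EP(f,C)$ and $A\hat x\in EP(F,Q)$, i.e.\ $\hat x\in\Omega$. I expect this step to be the main obstacle: one must be careful with the bookkeeping in order to produce a \emph{single} subsequence that is good for both $f$ and $F$, and one must argue that the conclusion holds for \emph{every} weak cluster point, not just for the one supplied by the summability argument; it is exactly assumption A4 that closes the gap between ``$f(\hat x,x^*)=0$'' and ``$\hat x$ solves the equilibrium problem'', and I would anticipate Section \ref{remarks} to display an example showing that A4 is indispensable here.

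\emph{Step 3: weak convergence and identification of the limit.} Since $\{\|x_n-x^*\|\}$ converges for each $x^*\in\Omega$ (Lemma \ref{lem2}(i)) and, by Step 2, every weak cluster point of the bounded sequence $\{x_n\}$ lies in $\Omega$, the Opial-type lemma applies: if $\hat x_1,\hat x_2\in\Omega$ were two weak cluster points, then, $\|x_n-\hat x_1\|^2$ and $\|x_n-\hat x_2\|^2$ both converging, their difference $\|\hat x_1\|^2-\|\hat x_2\|^2-2\langle x_n,\hat x_1-\hat x_2\rangle$ also converges, and evaluating along subsequences tending weakly to $\hat x_1$ and to $\hat x_2$ forces $\|\hat x_1-\hat x_2\|^2=0$; thus $\{x_n\}$ has a unique weak cluster point $x^\dagger\in\Omega$ and $x_n\rightharpoonup x^\dagger$. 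For the last claim, use the Fej\'er-type inequality (\ref{eq7a}) with $p=P_\Omega(x_n)\in\Omega$: this gives $d(x_{n+1},\Omega)^2\le d(x_n,\Omega)^2+(1+\mu_n)\delta_n$, so $\{d(x_n,\Omega)^2\}$ converges by Lemma \ref{lem.tech1}; iterating (\ref{eq7a}) with the fixed point $P_\Omega(x_n)$ and combining it with Lemma \ref{lemPC}(ii) (for $C=\Omega$, $x=P_\Omega(x_n)$, $y=x_m$, $m\ge n$) shows that $\{P_\Omega(x_n)\}$ is Cauchy, hence converges strongly to some point of $\Omega$; finally, letting $n\to\infty$ in $\langle x_n-P_\Omega(x_n),\,x^\dagger-P_\Omega(x_n)\rangle\le0$ (Lemma \ref{lemPC}(iii) with $p=x^\dagger$) identifies this strong limit with $x^\dagger=\lim_{n\to\infty}P_\Omega(x_n)$.
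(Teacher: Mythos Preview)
Your proposal follows the same architecture as the paper's proof: quasi-Fej\'er monotonicity from Lemma~\ref{lem2}(i), identification of a weak cluster point in $\Omega$ via the $\limsup$ information of Lemma~\ref{lem2}(iii) together with A2 and A4, an Opial argument for uniqueness of the weak limit, and finally the Cauchy argument for $\{P_\Omega(x_n)\}$ based on iterating (\ref{eq7a}) and invoking Lemma~\ref{lemPC}. Your extra asymptotic-regularity facts in Step~1 ($\|y_n-u_n\|\le\beta_n\to0$ and hence $\|z_n-x_n\|\to0$) are correct and make the transfer of weak limits between $\{x_n\}$, $\{z_n\}$ and $\{u_n\}$ cleaner than in the paper, which relies only on Lemma~\ref{lem2}(ii).

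Two differences deserve comment. First, in Step~2 you add the two summable series $\sum\frac{\beta_n}{\rho_n}[-f(z_n,x^*)]$ and $\sum\frac{\beta_n}{\rho_n}[-F(u_n,Ax^*)]$ to extract a \emph{single} subsequence along which both $f(z_{n_k},x^*)\to0$ and $F(u_{n_k},Ax^*)\to0$. This is tighter than the paper's treatment, which first picks the subsequence realising $\limsup_nf(z_n,x^*)$, obtains $x^\dagger\in EP(f,C)$, and then asserts---via ``arguing as in (\ref{eq:10}) and (\ref{eq:11}) but for the bifunction $F$''---that the induced subsequence $\{u_{m+1}\}$ also yields $Ax^\dagger\in EP(F,Q)$; that step is not fully justified since one does not know that $\limsup_mF(u_{m+1},Ax^*)=0$ along this particular subsequence. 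Your additive trick removes this difficulty.

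Second, you correctly flag that the summability argument certifies only \emph{one} weak cluster point as lying in $\Omega$, whereas the Opial step in your Step~3 is stated for \emph{every} cluster point. The paper handles the same passage by writing ``It is obvious that $\bar x\in\Omega$'' for an arbitrary second cluster point, without further argument. Neither your proposal nor the paper actually closes this point in the text; you are matching the paper's level of rigour while being more transparent about where the difficulty lies.
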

\begin{proof}
Since $\left\{z_n\right\}$ is bounded, without loss of generality, we can assume that there exists a subsequence 
$\left\{z_m\right\}$ of $\left\{z_n\right\}$ converging weakly to $x^\dagger$ such that 
\begin{equation}\label{eq:10}
\lim_{n\to\infty}\sup f(z_n,x^*)=\lim_{m\to\infty}f(z_m,x^*).
\end{equation}
Since $C$ is closed and convex in a Hilbert space, $C$ is weakly closed in $H_1$. Thus, from $\left\{z_m\right\}\subset C$, we get that $x^\dagger\in C$.
Then, it follows from the weak upper semicontinuity of $f(.,x^*)$, relation (\ref{eq:10}) and Lemma \ref{lem2}(iii) that 
\begin{equation}\label{eq:11}
f(x^\dagger,x^*)\ge \lim_{m\to\infty}\sup f(z_m,x^*)=\lim_{m\to\infty} f(z_m,x^*)
= \lim_{n\to\infty}\sup f(z_n,x^*)=0.
\end{equation}
Since $x^*\in EP(f,C)$ and $x^\dagger\in C$, we have $f(x^*,x^\dagger)\ge 0$. Thus, from the pseudomonotonicity of $f$, we get that 
$f(x^\dagger,x^*)\le 0$. This together with relation (\ref{eq:11}) implies that $f(x^\dagger,x^*)=0$ and, using A4, that $x^\dagger\in EP(f,C)$. \\[.1in]
Now we show that $Ax^\dagger\in EP(F,Q)$ and thus $x^\dagger\in \Omega$. Since 
$z_m\rightharpoonup x^\dagger$ and $||x_{m+1}-z_m||\to 0$, from Lemma \ref{lem2}(ii), 
we also have $x_{m+1}\rightharpoonup x^\dagger$, and thus $Ax_{m+1}\rightharpoonup 
Ax^\dagger$. Furthermore, also from Lemma \ref{lem2}(ii), we see that $||u_{m+1}-Ax_{m+1}||^2\to 0$ 
as $m\to\infty$, and thus $u_{m+1}\rightharpoonup Ax^\dagger$. The feasible set 
$Q$ being weakly closed and the subsequence $\left\{u_{m+1}\right\}$ being contained in 
$Q$, we obtain that $Ax^\dagger \in Q$. Arguing as in (\ref{eq:10}) and (\ref{eq:11}) but for the 
bifunction $F$, we also obtain that $Ax^\dagger \in EP(F,Q)$. \\[.1in]
Since $x^\dagger\in \Omega$ and Lemma \ref{lem2}(i), we can claim that the sequence $\left\{||x_n-x^\dagger||^2\right\}$ is convergent. 
Thus, from Lemma \ref{lem2}(ii), we also obtain the convergence of the sequence $\left\{||z_n-x^\dagger||^2\right\}$.
Now, we show the whole sequence $\left\{z_n\right\}$ converges weakly to $x^\dagger$. Indeed, 
assume that $\bar{x}$ is a weak cluster of the sequence $\left\{z_n\right\}$ such that $\bar{x}\ne x^\dagger$, i.e., there exists 
a subsequence $\left\{z_k\right\}$ of $\left\{z_n\right\}$ converging weakly to $\bar{x}$. It is obvious that $\bar{x}\in \Omega$ and thus that 
the sequence $\left\{||z_n-\bar{x}||^2\right\}$ is convergent. We have the following equality,
$$
2\left\langle z_n,\bar{x}-x^\dagger\right\rangle=||z_n-x^\dagger||^2-||z_n-\bar{x}||^2+||\bar{x}||^2-||x^\dagger||^2.
$$
Thus, the limit of the sequence $\left\{\left\langle z_n,\bar{x}-x^\dagger\right\rangle\right\}$ exists and is denoted by $l$, i.e.,
\begin{equation}\label{t1}
\lim_{n\to\infty}\left\langle z_n,\bar{x}-x^\dagger\right\rangle=l.
\end{equation}
Now, passing to the limit in (\ref{t1}) as $n=m\to\infty$ and after that $n=k\to\infty$, we obtain
$$
\left\langle x^\dagger,\bar{x}-x^\dagger\right\rangle=\lim_{m\to\infty}\left\langle z_m,\bar{x}-x^\dagger\right\rangle=
l=\lim_{k\to\infty}\left\langle z_k,\bar{x}-x^\dagger\right\rangle=\left\langle \bar{x},\bar{x}-x^\dagger\right\rangle.
$$
Hence, $||\bar{x}-x^\dagger||^2=0$ or $\bar{x}=x^\dagger$. This says that the whole sequence $\left\{z_n\right\}$ converges weakly to $x^\dagger$. 
Therefore, from Lemma \ref{lem2}(ii), we can conclude that the sequence $\left\{x_n\right\}$ converges weakly to $x^\dagger$. \\[.1in]
Finally, we prove $x^\dagger=\lim\limits_{n\to\infty}P_\Omega(x_n)$. Recalling the relation (\ref{eq7a})
\begin{equation}\label{h1}
||x_{n+1}-x^*||^2\le||x_n-x^*||^2+(1+\mu_n)\delta_n,~\forall x^*\in \Omega
\end{equation}
and substituting $x^*=P_\Omega(x_n)\in \Omega$ into (\ref{h1}), we obtain 
\begin{equation}\label{h2}
||x_{n+1}-P_\Omega(x_n)||^2\le||x_n-P_\Omega(x_n)||^2+(1+\mu_n)\delta_n.
\end{equation}
Since $\Omega$ is convex, we get from the definition of the metric projection that
$$
||x_{n+1}-P_\Omega(x_{n+1})||^2\le ||x_{n+1}-z||^2,~\forall z\in \Omega,
$$
which, with $z=P_\Omega(x_n)\in \Omega$, implies that 
\begin{equation}\label{h3}
||x_{n+1}-P_\Omega(x_{n+1})||^2\le||x_{n+1}-P_\Omega(x_n)||^2.
\end{equation}
Combining the relations (\ref{h2}) and (\ref{h3}), we come to the following estimate,
\begin{equation}\label{h4}
||x_{n+1}-P_\Omega(x_{n+1})||^2 \le ||x_n-P_\Omega(x_n)||^2+(1+\mu_n)\delta_n,
\end{equation}
or $a_{n+1}\le a_n+(1+\mu_n)\delta_n$ where $a_n=||x_n-P_\Omega(x_n)||^2$. Since $\sum\limits_{n=1}^\infty (1+\mu_n)\delta_n<+\infty$, 
from Lemma \ref{lem.tech1}, we see that the sequence $\left\{a_n\right\}$ converges as $n\to\infty$.\\[.1in] 
For each $n\ge 1$, let $b_n=P_\Omega(x_n)$. Then, the sequence $\left\{b_n\right\}$ converges to some $b\in H_1$. Indeed, for each $n\ge 1$ 
and $p\ge 1$, it follows from Lemma \ref{lemPC}(ii), the definition of $b_n$, and the relation (\ref{h1}) that 
\begin{eqnarray*}
||b_{n+p}-b_n||^2&=&||P_\Omega(x_{n+p})-P_\Omega(x_n)||^2\\
&\le&||x_{n+p}-P_\Omega(x_n)||^2-||x_{n+p}-P_\Omega(x_{n+p})||^2\\
&\le&\left(||x_{n+p-1}-P_\Omega(x_n)||^2+(1+\mu_{n+p-1})\delta_{n+p-1}\right)-a_{n+p}\\
&\le&||x_{n+p-2}-P_\Omega(x_n)||^2+(1+\mu_{n+p-2})\delta_{n+p-2}\\
&&+(1+\mu_{n+p-1})\delta_{n+p-1}-a_{n+p}\\
&\le&\ldots\\
&\le&||x_n-P_\Omega(x_n)||^2+\sum_{t=n}^{n+p-1}(1+\mu_t)\delta_t -a_{n+p}\\
&=&(a_n-a_{n+p})+\sum_{t=n}^{n+p-1}(1+\mu_t)\delta_t. 
\end{eqnarray*}
Passing to the limit in the last inequality as $n,~p\to\infty$ and noting that $\sum\limits_{n=1}^\infty (1+\mu_n)\delta_n<+\infty$, we obtain 
$$\lim\limits_{n,~p\to\infty}||b_{n+p}-b_n||^2=0.$$
Thus, the sequence $\left\{b_n\right\}$ is a Cauchy sequence in $H_1$, i.e., there exists $b \in H_1$ such that $\lim\limits_{n\to\infty}b_n=b$. From $b_n=P_\Omega(x_n)$ 
and Lemma \ref{lemPC}(iii), we obtain 
\begin{equation}\label{h5}
\left\langle x^\dagger -b_n,x_n-b_n\right\rangle \le 0.
\end{equation}
Passing to the limit in (\ref{h5}) as $n\to\infty$, we find that $||x^\dagger-b||^2=\left\langle x^\dagger -b,x^\dagger-b\right\rangle \le 0$. Thus $b=x^\dagger$ 
or $x^\dagger=\lim\limits_{n\to\infty}P_\Omega(x_n)$. This finishes the proof.
\end{proof}
\section{Further remarks}\label{remarks}
In this section, we present several remarks regarding the assumptions of Theorem \ref{theo1} in the previous section and 
an extension of Algorithm \ref{alg1} in the case when $f$ and $F$ can be splitted into several bifunctions. We begin 
with assumption A3.
\begin{remark}\label{rem0}
Assumption A3 has been also considered by the authors in \cite{IS2003,SS2011,YMH2016}. This assumption is used to prove that 
the subgradient sequence $\left\{g_n\right\}$ is bounded when $\left\{z_n\right\}$ is bounded (similarly, with the 
sequence $\left\{w_n\right\}$ for bifunction $F$). We can assume directly as in \cite{SS2011} that the sequences 
$\left\{g_n\right\}$ and $\left\{w_n\right\}$ are bounded. However, from the proofs of Lemma \ref{lem2}(iii) and Theorem 
\ref{theo1}, we see that, without assumption A3, the result in this paper is still true if $f$ and $F$ are jointly weakly continuous 
on two open sets containing $C$ and $Q$, respectively, see, e.g. \cite[Proposition 4.3]{VSN2012}. 
\end{remark}
In the next remark, by an example, we show that assumption A4 is necessary in the formulation of Theorem \ref{theo1}.
\begin{remark}\label{rem1}
Algorithm \ref{alg1} converges under the assumption that $f$, $F$ satisfy paramonotone condition A4. The following simple example implies 
that, without this condition, the iterative sequence generated by the algorithm cannot converge (weakly) to any solution of the problem. Indeed, consider our 
problem with $C=Q=H_1=H_2=\Re^2$, $A=I$ and $f(x,y)=F(x,y)=x_1y_2-x_2y_1$ for all $x,~y\in \Re^2$. The problem has an unique solution 
$x^*=(0,0)^T$. Assumptions A1-A3 are automatically satisfied for $f$ and $F$. However, the hypothesis A4 does not hold. Indeed, we have that 
$f(y,x^*)=F(y,Ax^*)=0$ for all $y\in C=Q=\Re^2$ which cannot imply that $y\in EP(f,C)$ or $y\in EP(F,Q)$. Now, by some computation, from 
Algorithm \ref{alg1}, we obtain for each $n\ge 0$ and $x_n=(x_{1n},x_{2n})^T\in \Re^2$ that
\begin{eqnarray*}
&&y_n=(x_{1n}+\gamma_n x_{2n},x_{2n}-\gamma_n x_{1n})^T,\\ 
&&z_n= (x_{1n}+\mu_n \gamma_n x_{2n},x_{2n}-\mu_n \gamma_n x_{1n})^T.
\end{eqnarray*}
Thus, from the definiton of $x_{n+1}$, we obtain for each $n\ge 0$,
$$ x_{n+1}= ((1-\mu_n \alpha_n \gamma_n)x_{1n}+(\mu_n \gamma_n +\alpha_n) x_{2n},(-\mu_n \gamma_n - \alpha_n)x_{1n}+(1-\mu_n \alpha_n \gamma_n) x_{2n})^T. $$
By setting $a_n=1-\mu_n \alpha_n \gamma_n$ and $b_n=\mu_n \gamma_n +\alpha_n$, $x_{n+1}$ can be shortly rewritten as follows:
$$  x_{n+1}= (a_nx_{1n}+b_n x_{2n},-b_nx_{1n}+a_n x_{2n})^T.  $$
This implies that 
\begin{eqnarray*}
||x_{n+1}||^2&=&(a_nx_{1n}+b_n x_{2n})^2+(-b_nx_{1n}+a_n x_{2n})^2=(a_n^2+b_n^2)||x_n||^2.
\end{eqnarray*}
On the other hand, it follows from the definitions of $a_n$ and $b_n$ that 
$$ a_n^2+b_n^2=1+ \mu_n^2 \alpha_n^2 \gamma_n^2+\mu_n^2 \alpha_n^2 +\gamma_n^2>1.$$
Therefore $||x_{n+1}||^2>||x_n||^2$ for each $n\ge 0$, which implies, by the induction, that $||x_{n+1}||^2>||x_0||^2$. 
Thus, $\lim\limits_{n\to\infty}||x_{n+1}||^2>0$, provided that $x_0\ne 0$. This says that the sequence $\left\{x_n\right\}$ cannot 
converge to the solution $x^*=(0,0)^T$ of the problem. Since the weak convergence and strong convergence are the same in finite 
dimensional spaces, the sequence $\left\{x_n\right\}$ cannot converge weakly to the solution $x^*=(0,0)^T$.
\end{remark}
\begin{remark}
The convergence of Algorithm \ref{alg1} can be ensured under the assumption that the solution set $\Omega$ of problem (SEP) is nonempty. We remark here 
that, without this assumption, the algorithm 
can diverge. It is sufficient to consider our problem with $H_1=H_2=\Re^2$, $C=\left\{(x,0)\in H_1: x\ge 1\right\}$, 
$Q=\left\{(x,y)\in H_2: x\ge 1,~y\ge \frac{1}{\sqrt{x}}\right\}$, the operator $A=I$, and the two bifunctions $f(x,y)=\delta_C(y)-\delta_C(x)$ for all $x, y \in C$, 
and $F(x,y)=\delta_Q(y)-\delta_Q(x)$ for all $x,~y\in Q$, where $\delta_C$ and $\delta_Q$ are the indicator functions to $C$ and $Q$, respectively. It is easy 
to see that the solution set of problem (SEP) is $\Omega=C\cap Q=\emptyset$. Note that the projection of any point in $C$ onto $Q$ is always on the boundary 
of $Q$. Assume that at iteration $n$, we have $x_n=(x_{1n},0)^T\in C$ with $x_{1n}\ge 1$. From Algorithm \ref{alg1} and $A=I$, we see that $u_n=
P_Q(Ax_n)=P_Q(x_n)$. Since $u_n$ is on the boundary of $Q$, it is of the form $u_n=\left(u_{1n},\frac{1}{\sqrt{u_{1n}}}\right)\in Q$, 
where $u_{1n}$ is the unique solution of the strongly convex problem $\min_{t\ge 1}||a-x_n||^2$ with $a=(t,\frac{1}{\sqrt{t}})$, or 
\begin{equation}\label{opt}
\min_{t\ge 1}\left\{h(t)= (t-x_{1n})^2+\frac{1}{t}\right\}.
\end{equation}
We have that $h'(t)=2(t-x_{1n})-\frac{1}{t^2}$. By a straightforward computation, we see that $h'(x_{1n}+\frac{1}{4x_{1n}^2})<0$. Thus, the unique 
optimal solution $u_{1n}$ of problem (\ref{opt}) must satisfy the inequality  $u_{1n}>x_{1n}+\frac{1}{4x_{1n}^2}$. Since $0\in \partial f_2(x,x)$ 
and $0\in \partial F_2(x,x)$ for all $x$, we can choose $w_n=g_n=0\in \Re^2$. Moreover, we can take $\mu_n=\frac{1}{2}\in (0,1)=(0,\frac{1}{||A||^2})$. 
Thus, from Algorithm \ref{alg1}, we obtain that $y_n=P_Q(u_n)=u_n$, $z_n=P_C\left(\frac{x_n+y_n}{2}\right)$ 
and $x_{n+1}=P_C(z_n)=P_C\left(\frac{x_n+y_n}{2}\right)=P_C\left(\frac{x_n+u_n}{2}\right)=(\frac{u_{1n}+x_{1n}}{2},0)^T$. This together with 
the inequality $u_{1n}>x_{1n}+\frac{1}{4x_{1n}^2}$ implies that 
$$ x_{1,n+1}=\frac{u_{1n}+x_{1n}}{2}>x_{1n}+\frac{1}{8x_{1n}^2},~\forall n\ge 0. $$
Thus, it is not difficult to see that $x_{1n}\to +\infty$. Hence $||x_n||=|x_{1n}|\to +\infty$ as $n\to\infty$. This says that the sequence $\left\{x_n\right\}$ 
generated by Algorithm \ref{alg1} diverges.
\end{remark}
\begin{remark}\label{rem3}
Algorithm \ref{alg1} can be extended to the case when $f=\sum_{i=1}^Nf_i$ and $F=\sum_{j=1}^M F_j$. In that case, the parallel projection algorithm is given by 
\begin{equation}\label{PPSM}
\begin{cases}
u_n=P_Q(Ax_n),~w_{n}^j\in \partial_{\epsilon_n}F_j(u_n,.)(u_n),\\
\gamma_n=\frac{\beta_n}{\max\left\{\rho_n,||w^1_{n}||,\ldots,||w^M_{n}||\right\}},~y^j_n=P_Q(u_n-\gamma_n w^j_{n}),\\
y_n=\frac{1}{M}\sum_{j=1}^M y_n^j,~z_{n}=P_C\left(x_n+\mu_n A^*(y_n-Ax_n)\right),\\
g_n^i\in \partial_{\epsilon_n}f_i(z_n,.)(z_n),~\alpha_{n}=\frac{\beta_n}{\max\left\{\rho_n,||g_n^1||,\ldots,||g_n^N||\right\}},\\
x_{n}^i=P_C(z_n-\alpha_{n} g_n^i),x_{n+1}=\frac{1}{N}\sum_{i=1}^N x_n^i.
\end{cases}
\end{equation}
Under the assumptions as in Theorem \ref{alg1}, the sequence $\left\{x_n\right\}$ generated by (\ref{PPSM}) converges weakly 
to some solution $x^\dagger$ of problem (SEP). Moreover, $x^\dagger=\lim_{n\to\infty}P_{\Omega}(x_n)$.
\end{remark}
\begin{remark}\label{rem4}
Algorithm \ref{alg1} is performed under the previous knowledge of the norm of operator $A$. An open question, is then to design an algorithm 
which can be used without the prior knowledge of the operator norm as, for instance, in \cite[Algorithm 4.1]{H2017GCOM} for problem 
(SEP) or in \cite{MT2014} for the split feasibility problem.
\end{remark}
\section{Split common equilibrium problems}\label{extension}
This section deals with an extension of Algorithm \ref{alg1} for solving the split common equilibrium problem (SCEP) considered 
in \cite{CGR2012,H2012,H2016}. This problem is stated as follows:
\begin{equation*}\label{SCEP}
\begin{cases}
\mbox{Find}~x^*\in C~\mbox{such that} ~f_i(x^*,y)\ge 0,~\forall y\in C,~i\in I,\\
\mbox{and}~u^*=Ax^*\in Q~\mbox{solves} ~F_j(u^*,u)\ge 0,~\forall u\in Q,~j\in J,
\end{cases}
\eqno{\rm (SCEP)}
\end{equation*}
where $I=\left\{1,2,\ldots,N\right\},~J=\left\{1,2,\ldots,M\right\}$; $C$ and $Q$ are two nonempty closed convex subsets of 
two real Hilbert spaces $H_1,~H_2$, respectively; 
$A:H_1\to H_2$ is a bounded linear operator; and $f_i:C\times C\to \Re$ and $F_j:Q\times Q\to \Re$ are bifunctions with 
$f_i(x,x)=0$ for all $x\in C$ and $F_j(u,u)=0$ for all $u\in Q$. We denote here by $\Omega$ the solution set of problem (SCEP) 
and assume that it is nonempty. It is well known that problem (SCEP) contains properly many split-like problems, see, e.g., 
\cite{CGR2012}. For solving problem (SCEP), He \cite{H2012} used the resolvent of a bifunction (the proximal point method) 
to propose a weakly convergent parallel algorithm \cite[algorithm (3.2)]{H2012} in the case $N>1$ and $M=1$. 
In a different direction, the author in \cite{H2016} has additionally incorporated in the previous algorithm the extended extragradient method and has proposed 
two weakly and strongly convergent parallel algorithms. In this section, as an extension of Algorithm \ref{alg1}, we present a 
different algorithm, which only uses the projections to design. \\[.1in]
In order to solve problem (SCEP), we also assume that for each $i\in I$ and $j\in J$, the 
bifunctions $f_i$ and $F_j$ have the same properties as $f$ and $F$ in Section \ref{main}. The algorithm is designed as follows:

\begin{algorithm}[Parallel algorithm for SCEPs].\label{alg2}\\
\noindent\rule{12.6cm}{0.4pt}\\
\textbf{Initialization:} Choose $x_0\in C$ and the parameter sequences $\left\{\rho_n\right\}$, $\left\{\beta_n\right\}$, $\left\{\epsilon_n\right\}$, 
$\left\{\mu_n\right\}$ such that condition C1-C3 above hold. Moreover, consider additionally the sequences $\left\{\theta_n^j\right\},~\left\{\tau_n^i\right\}
\subset [b,c]\subset (0,1)$ such that $\sum_{j\in J}\theta_n^j=\sum_{i\in I}\tau_n^i=1$.\\
\noindent\rule{12.6cm}{0.4pt}\\
\textbf{Iterative Steps:} Assume that $x_n\in C$ is known, calculate $x_{n+1}$ as follows:

\textbf{Step 1.} Select $w^j_{n}\in \partial_{\epsilon_n}F_j(u_n,.)(u_n)$ where $u_n=P_Q(Ax_n)$, and compute 
$$\gamma^j_{n}=\frac{\beta_n}{\max\left\{\rho_n,||w^j_{n}||\right\}},\qquad y^j_n=P_Q(u_n-\gamma^j_{n} w^j_{n}),\qquad y_n=\sum_{j\in J}\theta^j_n y_n^j.$$

\textbf{Step 2.} Compute $ z_{n}=P_C\left(x_n+\mu_n A^*(y_n-Ax_n)\right)$.

\textbf{Step 3.} Select $g^i_n\in \partial_{\epsilon_n}f_i(z_n,.)(z_n)$ and compute 
$$\alpha^i_n=\frac{\beta_n}{\max\left\{\rho_n,||g^i_n||\right\}},\qquad x^i_{n}=P_C(z_n-\alpha^i_n g^i_n),\qquad x_{n+1}=\sum_{i\in I}\tau^i_n x_n^i.$$

Set $n:=n+1$ and go back to \textbf{Step 1}.
\end{algorithm}
\noindent\rule{12.6cm}{0.4pt}\\[.1in]
We omit here the proof of convergence of Algorithm \ref{alg2}. In fact, it is easy to obtain it by repeating the proofs in the previous section. 
We have the following result.
\begin{theorem}\label{theo2}
The sequence $\left\{x_n\right\}$ generated by Algorithm \ref{alg2} converges weakly to some solution $x^\dagger$ of problem (SCEP). Moreover, 
$x^\dagger=\lim_{n\to\infty}P_\Omega(x_n)$.
\end{theorem}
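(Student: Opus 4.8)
The plan is to imitate, step by step, the chain Lemmas~\ref{lem0}--\ref{lem2} and Theorem~\ref{theo1}; the only structural novelty is that $y_n$ and $x_{n+1}$ are now convex combinations, so the convexity of the map $t\mapsto\|t-p\|^2$ must be used to pass from the components to their averages. Fix $x^*\in\Omega$. For each $i\in I$, Lemma~\ref{lem0} applied to the bifunction $f_i$ and to $x_n^i=P_C(z_n-\alpha_n^i g_n^i)$ gives $\|x_n^i-x^*\|^2\le\|z_n-x^*\|^2+2\alpha_n^i f_i(z_n,x^*)-\|x_n^i-z_n\|^2+\delta_n$. Multiplying by $\tau_n^i$, summing over $i\in I$, and using $\sum_{i\in I}\tau_n^i=1$ together with $\|x_{n+1}-x^*\|^2\le\sum_{i\in I}\tau_n^i\|x_n^i-x^*\|^2$ and $\sum_{i\in I}\tau_n^i\|x_n^i-z_n\|^2\ge\|x_{n+1}-z_n\|^2$ (Jensen), one gets the counterpart of Lemma~\ref{lem0},
$$\|x_{n+1}-x^*\|^2\le\|z_n-x^*\|^2+2\sum_{i\in I}\tau_n^i\alpha_n^i f_i(z_n,x^*)-\|x_{n+1}-z_n\|^2+\delta_n.$$
Arguing as for (\ref{eq:1}) but with $F_j$ and $y_n^j=P_Q(u_n-\gamma_n^j w_n^j)$, then multiplying by $\theta_n^j$ and summing over $j\in J$, one likewise gets $\|y_n-Ax^*\|^2\le\|u_n-Ax^*\|^2+2\sum_{j\in J}\theta_n^j\gamma_n^j F_j(u_n,Ax^*)+\delta_n$.

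Next I would reproduce the computation of Lemma~\ref{lem1} almost verbatim: inequalities (\ref{eq:2}) and (\ref{eq:3}) are unchanged because $u_n=P_Q(Ax_n)$ and $z_n=P_C(x_n+\mu_n A^*(y_n-Ax_n))$ keep the same form, and feeding the averaged bound on $\|y_n-Ax^*\|^2$ in place of (\ref{eq:1}), discarding the nonnegative term $\mu_n(1-\mu_n\|A\|^2)\|y_n-Ax_n\|^2$ via condition C3, and then combining with the averaged counterpart of Lemma~\ref{lem0} above, leads to
$$\|x_{n+1}-x^*\|^2\le\|x_n-x^*\|^2-\mu_n\|u_n-Ax_n\|^2-\|x_{n+1}-z_n\|^2+2\mu_n\sum_{j\in J}\theta_n^j\gamma_n^j F_j(u_n,Ax^*)+2\sum_{i\in I}\tau_n^i\alpha_n^i f_i(z_n,x^*)+(1+\mu_n)\delta_n,$$
the counterpart of Lemma~\ref{lem1}. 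Since $z_n\in C$, $x^*\in EP(f_i,C)$, $u_n\in Q$ and $Ax^*\in EP(F_j,Q)$, pseudomonotonicity (assumption A1) gives $f_i(z_n,x^*)\le0$ and $F_j(u_n,Ax^*)\le0$ for all $i\in I,\,j\in J$, so the estimate $\|x_{n+1}-x^*\|^2\le\|x_n-x^*\|^2+(1+\mu_n)\delta_n$ follows, and Lemma~\ref{lem.tech1} gives convergence of $\{\|x_n-x^*\|^2\}$ and boundedness of $\{x_n\}$. Summing the displayed inequality over $n$ exactly as in Lemma~\ref{lem2}(ii)--(iii) yields $\|x_{n+1}-z_n\|\to0$, $\|u_n-Ax_n\|\to0$ (using $\mu_n\ge a>0$), boundedness of $\{z_n\}$ and $\{u_n\}$, and, for every $i\in I$ and $j\in J$, $\sum_n\tau_n^i\alpha_n^i[-f_i(z_n,x^*)]<+\infty$ and $\sum_n\theta_n^j\mu_n\gamma_n^j[-F_j(u_n,Ax^*)]<+\infty$. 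Because $I$ and $J$ are finite, assumption A3 keeps all subgradient sequences $\{g_n^i\},\{w_n^j\}$ bounded, so there are constants $L_i,L'_j\ge\rho>0$ with $\alpha_n^i\ge\frac{\beta_n}{\rho_n}\cdot\frac{\rho}{L_i}$ and $\gamma_n^j\ge\frac{\beta_n}{\rho_n}\cdot\frac{\rho}{L'_j}$; together with $\tau_n^i,\theta_n^j\ge b>0$, $\mu_n\ge a>0$ and condition C2 this forces $\limsup_n f_i(z_n,x^*)=0$ and $\limsup_n F_j(u_n,Ax^*)=0$ for each $i,j$.

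The last part repeats the proof of Theorem~\ref{theo1}. Using the finiteness of $I$, extract a subsequence $\{z_m\}$ with $z_m\rightharpoonup x^\dagger\in C$ along which $f_i(z_m,x^*)\to\limsup_n f_i(z_n,x^*)=0$ for all $i\in I$ simultaneously; weak upper semicontinuity of $f_i(\cdot,x^*)$ (assumption A2) gives $f_i(x^\dagger,x^*)\ge0$, pseudomonotonicity gives $f_i(x^\dagger,x^*)\le0$, hence $f_i(x^\dagger,x^*)=0$, and the paramonotone condition A4 gives $x^\dagger\in EP(f_i,C)$ for each $i$. From $\|x_{m+1}-z_m\|\to0$ and $\|u_{m+1}-Ax_{m+1}\|\to0$ one obtains $u_{m+1}\rightharpoonup Ax^\dagger\in Q$, and the same extraction over the finite set $J$ together with $\limsup_n F_j(u_n,Ax^*)=0$ gives $Ax^\dagger\in EP(F_j,Q)$ for each $j$; hence $x^\dagger\in\Omega$. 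Convergence of $\{\|z_n-p\|^2\}$ for every $p\in\Omega$ (from Lemma~\ref{lem2}(i)--(ii)) and the identity $2\langle z_n,\bar x-x^\dagger\rangle=\|z_n-x^\dagger\|^2-\|z_n-\bar x\|^2+\|\bar x\|^2-\|x^\dagger\|^2$ show that the weak cluster point of $\{z_n\}$ is unique, so $z_n\rightharpoonup x^\dagger$ and therefore $x_n\rightharpoonup x^\dagger$. Finally, substituting $x^*=P_\Omega(x_n)$ and $x^*=P_\Omega(x_{n+1})$ into the estimate $\|x_{n+1}-x^*\|^2\le\|x_n-x^*\|^2+(1+\mu_n)\delta_n$ and invoking Lemma~\ref{lemPC}(ii)--(iii) together with Lemma~\ref{lem.tech1} shows, word for word as in Theorem~\ref{theo1}, that $\{P_\Omega(x_n)\}$ is a Cauchy sequence with limit $x^\dagger$, i.e.\ $x^\dagger=\lim_{n\to\infty}P_\Omega(x_n)$. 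The main obstacle, modest as it is, is the bookkeeping with the two families of weights $\{\tau_n^i\}$ and $\{\theta_n^j\}$: after summing the main inequality over $n$ one must recover summability \emph{for each fixed index} $i$ and $j$, and this is precisely where the uniform lower bound $b>0$ on the weights (and $a>0$ on $\{\mu_n\}$) is needed.
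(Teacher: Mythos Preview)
Your proposal is correct and follows exactly what the paper indicates: the paper omits the proof of Theorem~\ref{theo2} entirely and merely states that it is obtained by repeating the proofs of Section~\ref{main}. Your sketch supplies those details, correctly isolating the only new ingredients --- the convexity of $t\mapsto\|t-p\|^2$ to pass from the components $x_n^i,\,y_n^j$ to their averages $x_{n+1},\,y_n$, and the uniform lower bound $b>0$ on the weights $\tau_n^i,\,\theta_n^j$ needed to recover the individual-index conclusions in the analogue of Lemma~\ref{lem2}(iii).
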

\section{Computational experiments}\label{example}
This section presents several experiments to illustrate the numerical behavior of Algorithm \ref{alg1} (shortly, PM) and also to compare it with the behaviors of other 
well known algorithms. The test problem here can be considered as an extension of the Nash-Cournot oligopolistic equilibrium model in \cite{CKK2004,FP2002} 
to the split equilibrium model in \cite{H2017GCOM}. More precisely, the problem is for $H_1=\Re^m$ and $H_2=\Re^k$.  The bifunction $f$ on $H_1$ is 
of the form 
$$ f(x,y)= \left\langle \bar{M} x+\bar{N}y+p,y-x\right\rangle,$$
where $p$ is a vector in $\Re^m$ and $\bar{M},~\bar{N}$ are two matrices of order $m$ such that $\bar{N}$ is symmetric positive semidefinite and $\bar{N}-\bar{M}$ is negative 
semidefinite. The bounded linear operator $A:\Re^m\to \Re^k$ is defined by a matrix of size $k\times m$. All the entries of $A$ are generated randomly 
(and uniformly) in $[-10,10]$. The bifunction $F$ also has the following 
form
$$ F(x,y)=\left\langle Mx+Ny+q,y-x\right\rangle$$
where $q$ is a vector in $\Re^k$ and $M,~N$ are two matrices of order $k$ such that $N$ is symmetric positive semidefinite and $N-M$ is negative 
semidefinite. Two feasible sets respectively are $C=[-1,5]^m$ and $Q=[-2,5]^k$. In the purpose that the solution set of the problem is nonempty and 
that all the algorithms can work, the two vectors $p$ and $q$ are chosen as the two zero vectors in $\Re^m$ and $\Re^k$, respectively.  The matrices 
$\bar{M}$ and $\bar{N}$ are generated randomly to satisfy the conditions\footnote{Choose randomly $\lambda_{1k}\in [-10,0],~
\lambda_{2k}\in [1,10]$ for all $k=1,\ldots,m$. Set $\widehat{Q}_1$, $\widehat{Q}_2$ as two diagonal matrixes with eigenvalues 
$\left\{\lambda_{1k}\right\}_{k=1}^m$ and $\left\{\lambda_{2k}\right\}_{k=1}^m$, respectively. Then, we consider a positive semidefinite matrix 
$\bar{N}$ and a negative semidefinite matrix $T$ by using full random orthogonal matrixes with $\widehat{Q}_2$ and $\widehat{Q}_1$, respectively. 
Finally, set $\bar{M}=\bar{N}-T$} (the matrices $M, ~N$ are also generated randomly at this way).\\[.1in]
This section is divided into two parts: Subsection \ref{exper1} studies the numerical behavior of Algorithm \ref{alg1}, 
while Subsection \ref{exper2} 
reports several results in comparing Algorithm \ref{alg1} with other algorithms, namely the Extragradient-Proximal Method (EGPM) in \cite[Algorithm 1]{H2016}; 
the Hybrid Extragradient- Proximal Method (HEGPM) in \cite[Algorithm 2]{H2016}; the Projected Subgradient-Proximal Method (PSPM) in 
\cite[Algorithm 3.1]{H2017GCOM}; and the Modified Projected Subgradient-Proximal Method (MPSPM) in \cite[Algorithm 4.1]{H2017GCOM}. 
The solution of the considered problem 
is $x^*=0$ and it is easy to see that conditions A1-A4 are satisfied. Thus, also as in \cite{H2017GCOM}, all the algorithms can be applied. 
We have used the sequence 
$D_n=||x_n-x^*||^2$, $n=0,1,\ldots$ to study the convergence of all the algorithms. The starting point is 
$x_0=(1,1,\ldots,1)^T\in H_1$. The convergence of $D_n$ to $0$ implies that the sequence $\left\{x_n\right\}$ generated by each algorithm 
converges to the solution $x^*$ of the problem.\\[.1in]
All the projections and the optimization problems are solved effectively by using the function \textit{quadprog} in the Matlab 7.0 Optimization 
Toolbox. All the programs are written in Matlab and computed on a PC Desktop Intel(R) Core(TM) i5-3210M CPU @ 2.50 GHz, RAM 2.00 GB. 
\subsection{Numerical behavior of Algorithm \ref{alg1}}\label{exper1}
In this part, the four matrices $M$, $N$, $\bar{M}$ and $\bar{N}$ are generated ramdomly. In this case, it is not easy 
to implement the four algorithms EGPM, HEGPM, PSPM, MPSPM because they use the resolvent mapping which in general is difficult to compute. 
Then, we only illustrate the numerical behavior of Algorithm \ref{alg1}. The six sequences of $\left\{\beta_n\right\}$ 
are taken as $\beta_n=\frac{1}{(n+1)^s},~s\in \left\{1;~0.9;~0.8;~0.7;~0.6;~0.51\right\}$. Other parameters are $\epsilon_n=0$, $\rho_n=1$, 
$\mu_n=\frac{1}{||A||^2}$. Figures \ref{fig1} - \ref{fig4} show the behavior of $\left\{D_n\right\}$ generated by Algorithm \ref{alg1} for different pairs of $(m,k)$. 
In each figure, the $y$-axis represents the value of $D_n$ while the $x$-axis is for the execution time elapsed in second. 
In view of these figures, we see that the rate of convergence of Algorithm \ref{alg1} depends strictly on the rate of convergence of the sequence 
$\left\{\beta_n\right\}$. Algorithm \ref{alg1} in general works well for the sequences $\beta_n=\frac{1}{(n+1)^s}$ with $s\in \left\{0.8,~0.7,~0.6,~0.51\right\}$, 
and it is especially noted that in all the cases the new algorithm works badly for the natural sequence 
$\beta_n=\frac{1}{n+1}$. 
\begin{figure}[!ht]
\centering
\includegraphics[height=5cm,width=6cm]{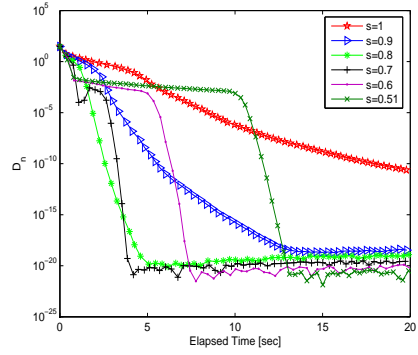}
\caption{Algorithm \ref{alg1} for $(m,k)=(30,20)$ and different sequences of $\beta_n$. The number of iterations is 
360,   353,   339,   360,   355,   376, respectively.}\label{fig1}
\end{figure}
\begin{figure}[!ht]
\centering
\includegraphics[height=5cm,width=6cm]{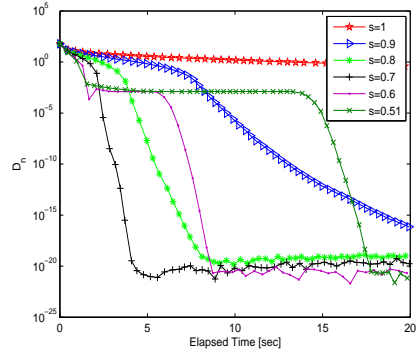}
\caption{Algorithm \ref{alg1} for $(m,k)=(60,40)$ and different sequences of $\beta_n$. The number of iterations is  
258,   333,   336,   326,   291,   293, respectively.}\label{fig2}
\end{figure}
\begin{figure}[!ht]
\centering
\includegraphics[height=5cm,width=6cm]{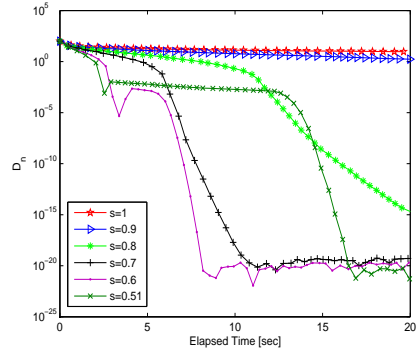}
\caption{Algorithm \ref{alg1} for $(m,k)=(100,50)$ and different sequences of $\beta_n$. The number of iterations is  
215,   236,   283,   280,   321,   290, respectively.}\label{fig3}
\end{figure}
\begin{figure}[!ht]
\centering
\includegraphics[height=5cm,width=6cm]{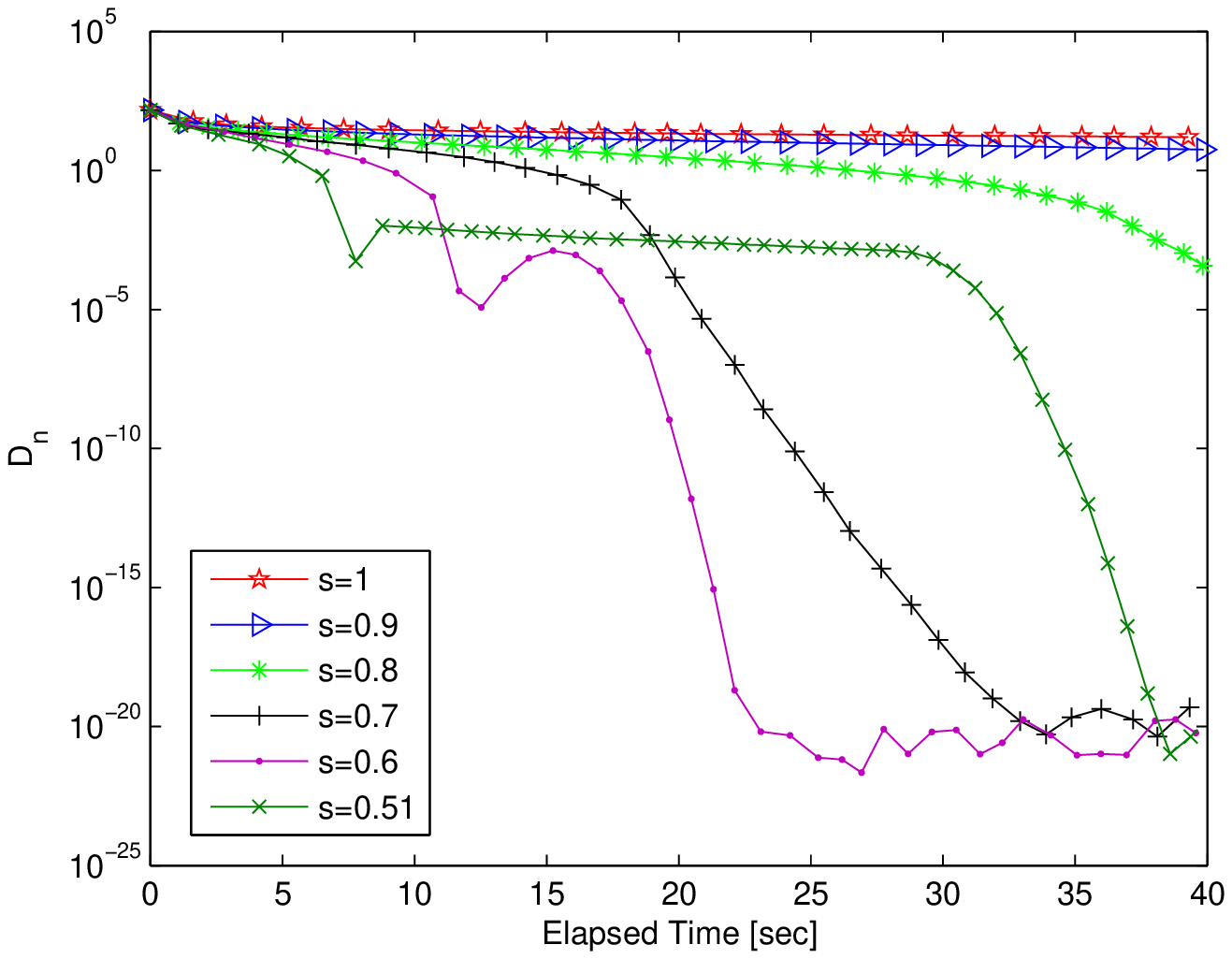}
\caption{Algorithm \ref{alg1} for $(m,k)=(150,100)$ and different sequences of $\beta_n$. The number of iterations is  
161,   188,   219,   209,   245,   264, respectively.}\label{fig4}
\end{figure}
\subsection{Comparison of Algorithm \ref{alg1} with other algorithms}\label{exper2}
Four aforementioned algorithms EGPM, HEGPM, PSPM and MPSPM have been designed from the resolvent $T_r^F$ of the bifunction $F$. 
In order to compute easily the value of the resolvent mapping $T_r^F$, we have chosen $M=N$. In that case, the resolvent $T^F_r$ 
of $F$ coincides with the proximal mapping of the function $g(x)=\left\langle Mx,x\right\rangle$ for $r>0$, i.e., $T_r^F(u)={\rm prox}_{rg}(u)$, 
where 
$$ {\rm prox}_{rg}(u)=\arg\min\left\{ g(v)+\frac{1}{r}||v-u||^2:v\in Q\right\}. $$
The mapping ${\rm prox}_{rg}(u)$ can be effectively computed by using the Optimization Toolbox in Matlab. Moreover, it is emphasized that 
although the conditions of convergence of the four compared algorithms in general are different to the ones of Algorithm \ref{alg1}, we still wish 
to present a numerical comparison between them. For implementing algorithms EGPM and HEGPM \cite{H2016}, we need two Lipschitz-type 
constants $c_1 $ and $c_2$ of $f$ (they are $c_1=c_2=||\bar{M}-\bar{N}||/2$). The parameters have been chosen in all the experiments as follows:\\[.1in]

(i) $\lambda=\frac{1}{5c_1},~\mu_n=\mu=\frac{1}{||A||^2}$ for EGPM, HEGPM, PSPM and Algorithm \ref{alg1} (PM).\\[.1in]

(ii) $\epsilon_n=0,~\rho_n=1,~\beta_n=\frac{1}{(n+1)^{0.7}}$ for PSPM, MPSPM and Algorithms \ref{alg1}.\\[.1in]

(iii) $\nu_n=3$ for MPSPM and $r_n=1$ for EGPM, HEGPM, PSPM, MPSPM.\\[.1in]
Figures \ref{fig5} - \ref{fig8} describe the behavior of the sequence $\left\{D_n\right\}$ generated by the algorithms. In view of this, we see that 
the proposed algorithm has competitive advantages over existing algorithms. It is also seen that the obtained error from Algorithm 
\ref{alg1} is better than the one from other algorithms.
\begin{figure}[!ht]
\centering
\includegraphics[height=5cm,width=6cm]{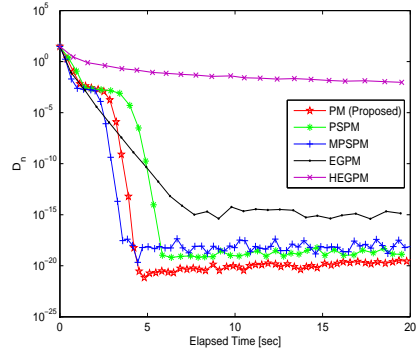}
\caption{Experiment for the algorithms with $(m,k)=(30,20)$. The number of iterations is  334,   240,   379,   168,   130, respectively.}\label{fig5}
\end{figure}
\begin{figure}[!ht]
\centering
\includegraphics[height=5cm,width=6cm]{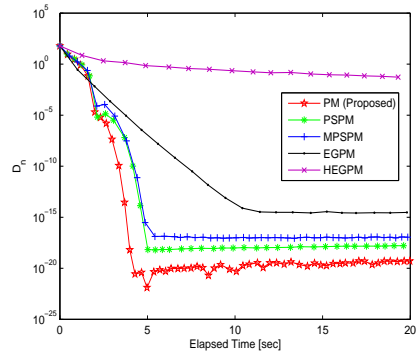}
\caption{Experiment for the algorithms with $(m,k)=(60,40)$. The number of iterations is  326,   221,   292,   129,   108, respectively.}\label{fig6}
\end{figure}

\begin{figure}[!ht]
\centering
\includegraphics[height=5cm,width=6cm]{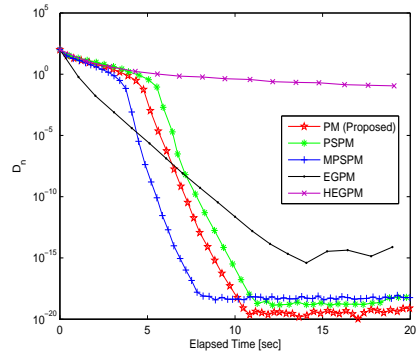}
\caption{Experiment for the algorithms with $(m,k)=(100,50)$. The number of iterations is  308,   250,   356,   114,    89, respectively.}\label{fig7}
\end{figure}
\begin{figure}[!ht]
\centering
\includegraphics[height=5cm,width=6cm]{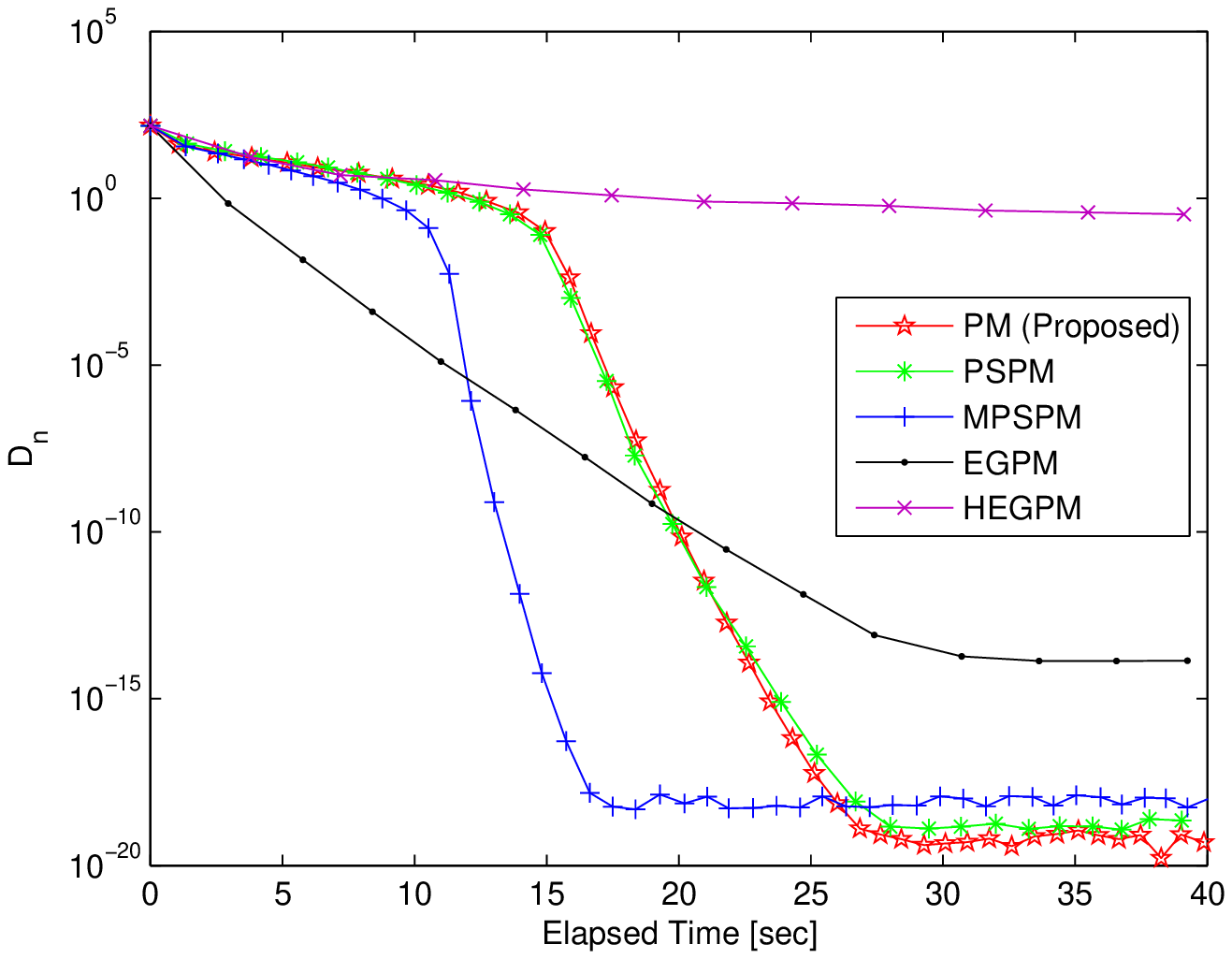}
\caption{Experiment for the algorithms with $(m,k)=(150,100)$. The number of iterations is  254,   192,   271,    87,    69, respectively.}\label{fig8}
\end{figure}
\section{Conclusions}
The paper has considered a class of split inverse problems involving equilibrium problems in Hilbert spaces, so-called briefly the split equilibrium problem. 
This problem unifies in a simple form various previously known split-type problems. A new algorithm, which only uses the projections to design, has been 
proposed for approximating the solutions. A theorem of weak convergence has been proved under suitable conditions. The convergent conditions are also 
discussed, and as be seen, they are almost necessary in the formulation of the convergence theorem. Several extensions of the resulting algorithm to the split 
common equilibrium problem have been also presented in the paper. The numerical behavior of the new algorithm is studied by reporting some numerical 
experiments. In particular, it is seen that the proposed algorithm also has competitive advantages over existing methods.

\section*{Acknowledgments} 
The author would like to thank the Associate Editor and the two anonymous referees 
for their valuable comments and suggestions which helped us very much in improving the original version of this paper.  
This work is supported by Vietnam 
National Foundation for Science and Technology Development (NAFOSTED) under the project: 101.01-2017.315

\end{document}